\newcommand{\gb}{\beta}
\newcommand{\ga}{\alpha}
\newcommand{\gd}{\delta}
\newcommand{\gD}{\Delta}
\renewcommand{\ge}{\epsilon}
\newcommand{\fg}{{\mathfrak g}}
\newcommand{\fh}{{\mathfrak h}}
\newcommand{\fl}{{\mathfrak l}}
\newcommand{\fn}{{\mathfrak n}}
\newcommand{\fq}{{\mathfrak q}}
\newcommand{\fz}{{\mathfrak z}}
\newcommand{\f}{\mathfrak}
\newcommand{\eC}{\EuScript{C}}
\newcommand{\eD}{\EuScript{D}}
\newcommand{\eM}{\EuScript{M}}
\newcommand{\eV}{\EuScript{V}}
\newcommand{\eX}{\EuScript{X}}
\newcommand{\nbar}{\bar{n}}                 
\newcommand{\Nbar}{\bar{N}}
\newcommand{\R}{\mathbb{R}}          
\newcommand{\C}{\mathbb{C}}          
\newtheorem{Thm}[equation]{Theorem}
\newtheorem{Lem}[equation]{Lemma}
\newtheorem{Cor}[equation]{Corollary}
\newtheorem{Prop}[equation]{Proposition}
\numberwithin{equation}{section}
\newcommand{\be}{\begin{equation}}
\newcommand{\beu}{\begin{equation*}}
\newcommand{\acts}{ {\raisebox{1pt} {$\scriptstyle \bullet$} } }
\newcommand{\ad}{\text{ad}}
\newcommand{\Ad}{\text{Ad}}
\newcommand{\Cal}{\mathcal}
\newcommand{\IP}[2]{\langle#1 , #2\rangle}     
\newcommand{\tOmega}{\tilde{\Omega}}
\begin{document}

\bibliographystyle{amsplain}

\baselineskip=16pt

\title[Conformally Invariant Systems]
{Systems of Third-Order Differential Operators Conformally Invariant under 
$\f{sl}(3,\mathbb{C})$ and $\f{so}(8,\mathbb{C})$}

\author{Toshihisa Kubo}
\address{Department of Mathematics,
             Oklahoma State University,
              Stillwater, Oklahoma 74078}
\email{ toskubo@math.okstate.edu}

\subjclass[2010]{Primary 22E46; Secondary 22E47, 17B10}
\keywords{differential intertwining operators, generalized Verma modules, real flag manifolds}

\begin{abstract}
In earlier work, Barchini, Kable, and Zierau constructed a number of conformally
invariant systems of differential operators associated to Heisenberg parabolic
subalgebras in simple Lie algebras. The construction was systematic,
but the existence of such a system was left open in two cases,
namely, the $\Omega_3$ system for type $A_2$ and type $D_4$.
Here, such a system is shown to exist for both cases.
The construction of the system may also be interpreted as 
giving an explicit homomorphism between generalized Verma modules.
\end{abstract}
\maketitle

\section{Introduction}\label{Section1}


Conformally invariant systems of differential operators 
on a smooth manifold $M$ on which a Lie algebra $\fg$ acts by first order 
differential operators were studied by Barchini, Kable, and Zierau 
in \cite{BKZ08} and \cite{BKZ09}.  
We first recall the definition of the conformally invariant systems
from \cite{BKZ09} here.
Let $\fg_0$ be a real Lie algebra. We say that a smooth manifold $M$
is a \emph{$\fg_0$-manifold} if there exists a $\fg_0$-homomorphism 
$\Pi_M: \fg_0 \to C^\infty(M)\oplus \eX(M)$, 
where $\eX(M)$ is the space of smooth vector fields on $M$. 
Given $\fg_0$-manifold $M$, we write 
$\Pi_M(X) = \Pi_0(X) + \Pi_1(X)$ with $\Pi_0(X) \in C^\infty(M)$
and $\Pi_1(X) \in \eX(M)$. 
Let $\mathbb{D}(\eV)$ denote the space of differential operators 
on a vector bundle $\eV \to M$.
Then we say that a vector bundle $\eV \to M$
a \emph{$\fg_0$-bundle} if 
there exists a $\fg_0$-homomorphism 
$\Pi_\eV: \fg_0 \to \mathbb{D}(\eV)$ so that 
in $\mathbb{D}(\eV)$ $[\Pi_\eV(X), f]= \Pi_1(X) \acts f$
for all $X \in \fg_0$ and all $f \in C^\infty(M)$,
where the dot $\acts$ denotes 
the action of the differential operator $\Pi_1(X)$. 
Note that we regard any smooth functions $f$ on $M$ 
as elements in $\mathbb{D}(\eV)$ by identifying them with
the multiplication operator they induce.
Then, given $\fg_0$-bundle $\eV\to M$,
a list of differential operators
$D_1, \ldots, D_m \in \mathbb{D}(\eV)$
is said to be a \emph{conformally invariant system}
on $\eV$ with respect to $\Pi_\eV$
if the following two conditions are satisfied:
\begin{enumerate}
\item[(S1)] The list $ D_1, \ldots, D_m $ is linearly independent at each point of $M$. 
\item[(S2)] For each $Y \in \fg_0$ there is an $m \times m$ matrix $C(Y)$ of smooth functions
on $M$ so that
\begin{equation*}
[\Pi_\eV(Y), D_j] = \sum_{i=1}^m C_{ij}(Y)D_i
\end{equation*}
\end{enumerate}
in $\mathbb{D}(\eV)$. 
By extending the $\fg_0$-homomorphisms $\Pi_M$ and $\Pi_\eV$ $\C$-linearly, 
the definitions of a $\fg_0$-manifold, a $\fg_0$-bundle, and 
a conformally invariant system can be applied equally well to
the complexified Lie algebra $\fg = \fg_0 \otimes_\R \C$.

While a general theory of conformally invariant systems is developed in \cite{BKZ09},  
examples of such systems of differential operators 
associated to the Heisenberg parabolic subalgebras
of any complex simple Lie algebras are constructed in \cite{BKZ08}.
The purpose of this paper is to answer a question, left open in 
\cite{BKZ08}, concerning the existence of certain conformally invariant
systems of third-order differential operators. 
This is done by constructing the required systems.

This result may be interpreted as giving an explicit homomorphism between
two generalized Verma modules, one of which is non-scalar.
See Section 6 of \cite{BKZ09} for the general theory.
In this paper we describe it explicitly in a less general setting
(see the discussion after Lemma \ref{Lem43}). 
The problem of constructing and classifying homomorphisms between 
scalar generalized Verma modules has received a lot of attention;
for recent work, see, for example, \cite{Matumoto06}.
In \cite{Matumoto06}, Matumoto classifies 
the non-zero $\Cal{U}(\fg)$-homomorphisms 
between scalar generalized Verma modules associated to
maximal parabolics of non-Hermitian symmetric type. With the earlier work
\cite{Boe85} of Boe for the Hermitian symmetric cases, the existence
problem for scalar generalized Verma modules of maximal parabolic type
is solved. 
However, much less is known about maps between generalized Verma modules 
that are not necessarily scalar.


In order to explain our main results in this paper,
we briefly review the results of \cite{BKZ08} here. 
To begin with, 
let $\fg$ be a complex simple Lie algebra and $\fq = \fl \oplus \fn$
be the parabolic subalgebra of Heisenberg type; 
that is,
$\fn$ is a two-step nilpotent algebra with one-dimensional center. 
We denote by $\gamma$ the highest root of $\fg$.
For each root $\ga$ let $\{X_{-\ga}, H_{\ga}, X_\ga\}$
be a corresponding  $\f{sl}(2)$-triple,
normalized as in Section 2 of \cite{BKZ08}.
Then $\ad(H_\gamma)$ on $\fg$ has eigenvalues $-2$, $-1$, $0$, $1$, $2$, 
and the corresponding eigenspace decomposition of $\fg$ is denoted by
\begin{equation}\label{Intro1}
\fg = \fz(\bar\fn) \oplus V^- \oplus \fl \oplus V^+ \oplus \fz(\fn).
\end{equation}

Let $\mathbb{D}[\fn]$ be the Weyl algebra of $\fn$;
that is, the algebra of partial differential operators on $\fn$ with polynomial coefficients.
Then each system of $k$-th order differential operators 
constructed in \cite{BKZ08} derives from a $\C$-linear map 
$\Omega_k: \fg(2-k) \to \mathbb{D}[\fn]$ 
with $1\leq k \leq 4$ and $\fg(2-k)$ the $2-k$ eigenspace of $\ad(H_\gamma)$.
Let $\Pi_s: \fg \to \mathbb{D}[\fn]$
be the Lie algebra homomorphism constructed in Section 4 in \cite{BKZ08}.
Here $s$ is a complex parameter, which indexes line bundles $\Cal{L}_{-s}$
over a real flag manifold $G_0/\bar{Q}_0$,
where $G_0$ is a real Lie group with Lie algebra $\fg_0$
and $\bar{Q}_0$ is a parabolic subgroup of $G_0$
with complexified Lie algebra $\bar \fq$ opposite to $\fq$.
We say that the $\Omega_k$ system has special value $s_0$
when the system is conformally invariant under $\Pi_{s_0}$.

In \cite{BKZ08} the special values of $s$ are determined
for the $\Omega_k$ systems with $k=1,2,4$
for all complex simple Lie algebras, but 
only exceptional cases are considered for the $\Omega_3$ system.
A table in Section 8.10 in \cite{BKZ08} lists the special values of 
$s$. The reader may want to notice that
the entries in the columns for the systems 
$\Omega_2^{\text{big}}$ and $\Omega_2^{\text{small}}$ 
for types $B_r$ and $C_r$ should be transposed.
Theorem 21 in \cite{BKZ09} then shows that the $\Omega_3$ system
does not exist for $A_r$ with $r \geq 3$, $B_r$ with $r\geq 3$, and 
$D_r$ with $r \geq 5$.
There remain two open cases, namely, the $\Omega_3$ system
for type $A_2$ and type $D_4$. 
The aim of this paper is to show 
that the $\Omega_3$ system does exist for both cases
(see Theorem \ref{Thm51} and Theorem \ref{Thm311}).
In order to achieve the result we use several facts 
from both \cite{BKZ08} and \cite{BKZ09}.
By using these facts, we significantly 
reduce the amount of computation to show the existence of the system.

There are two differences between our conventions here and those used in \cite{BKZ08}.
One is that we choose the parabolic $Q_0 = L_0 N_0$ for the real flag manifold,
while the opposite parabolic $\bar Q_0 = L_0 \bar N_0$ is chosen in \cite{BKZ08}.
Because of this, our special values of $s$
are of the form $s = -s_0$, where $s_0$ 
are the special values shown in Section 8.10 in \cite{BKZ08}.
The other is that we identify $(V^+)^*$ with $V^-$ by using the Killing form,
while $(V^+)^*$  in \cite{BKZ08} is identified with $V^+$
by using the non-degenerate alternating form $\IP{\cdot}{\cdot}$ on $V^+$
defined by $[X_1, X_2] = \IP{X_1}{X_2}X_\gamma$ for $X_1, X_2 \in V^+$. 
Because of this difference the right action $R$, which will be defined
in Section 2, will play the role played by $\Omega_1$ in \cite{BKZ08}.
In addition to these notational differences, there are also some methodological 
differences between \cite{BKZ08} and what we do here. 
These stem from the fact that we make systematic use of the results
of \cite{BKZ09} to streamline the process of proving conformal invariance. 

We now outline the remainder of this paper.
In Section \ref{Section3}, we review the setting and results of Section 5 in \cite{BKZ09},
simultaneously specializing them to the situation considered here.
In Section \ref{Section4},
we specialize further by taking $\fg$ to be simply laced.
We fix a suitable Chevalley basis
and define the $\Omega^t_3$ system
by $\Omega_3^t = \tOmega_3 + tC_3$ for $t \in \C$.
A remark on notation might be helpful here. In \cite{BKZ08},
a system $\Omega'_3$ is initially defined. It is then shown to decompose
as a sum of a leading term $\tOmega_3$ and a correction term $C_3$. 
These two are recombined with different coefficients to give $\Omega_3$,
which is finally shown to be conformally invariant for exceptional algebras.
Thus, the $\Omega_3$ system is defined to exist if there exists $t_0 \in \C$
so that the $\Omega_3^{t_0}$ system is conformally invariant.

In Section \ref{Section5}, we take $\fg$ to be of type $A_2$
and show that the $\Omega_3$ system(s) exists over the line bundle $\Cal{L}_0$.
The Heisenberg parabolic subalgebra coincides 
with the Borel subalgebra in this case.
Thus $V^-$ decomposes as the direct sum of two one-dimensional
$\fl$-submodules. This implies that there will be two $\Omega_3$ systems,
each of the operators will be conformally invariant all by itself.
The conformal invariance of the these operators is 
shown in Theorem \ref{Thm51}.

In Section \ref{Section6}, we take $\fg$ to be of type $D_4$.
For type $D_4$, the data in p.831 and Theorem 6.1 of \cite{BKZ08}
suggest that the complex parameter $t_0$ for the $\Omega_3^t$ system to 
be conformally invariant is $t_0 =0$, 
so that the correction term $C_3$ is discarded completely. 
For this reason, we simply proceed to show that $\tOmega_3$ 
is conformally invariant. This is done in Theorem \ref{Thm311}.

\vskip 0.1in

\noindent \textbf{Acknowledgment.} The author would like to
be grateful to Dr. Anthony Kable for his valuable suggestions and comments on this paper.
He would also like to thank the referee for the fruitful comments.




\section{A specialization of the theory}\label{Section3}

The purpose of this section is to 
introduce the $\fg$-manifold and the $\fg$-bundle
that we study in this paper. 
Let $G_0$ be a connected real semisimple Lie group with Lie algebra $\fg_0$ and 
complexified Lie algebra $\fg$. Let $Q_0$ be a parabolic subgroup of $G_0$ and 
$Q_0 = L_0N_0$ a Levi decomposition of $Q_0$. By the Bruhat decomposition,
the subset $\bar{N}_0Q_0$ of $G_0$ is open and dense in $G_0$,
where $\bar{N}_0$ is the nilpotent subgroup of $G_0$ opposite to $N_0$.
Let $\bar\fn$ and $\fq$ be the complexifications of 
the Lie algebras of $\bar N_0$ and $Q_0$, respectively;
we have the direct sum $\fg = \bar \fn \oplus \fq$.
For $Y \in \fg$, write $Y = Y_{\bar \fn} + Y_{\fq}$ for the decomposition 
of $Y$ in this direct sum.
Similarly, write the Bruhat decomposition of $g \in \bar N_0 Q_0$ as 
$g= \mathbf{\bar n}(g)\mathbf{q}(g)$ with $\mathbf{\bar n}(g) \in \bar N_0$ and 
$\mathbf{q}(g) \in Q_0$. Note that for $Y \in \fg_0$ we have
\beu
Y_{\bar \fn} = \frac{d}{dt} \mathbf{\bar n}(\exp(tY)) \big|_{t=0},
\end{equation*}
and a similar equality holds for $Y_{\fq}$.

We consider the homogeneous space $G_0/Q_0$. Let $\C_{\chi^{-s}}$ be 
the one-dimensional representation of $L_0$ with character $\chi^{-s}$
with $ s \in \C$, where $\chi$ is a real-valued character of $L_0$.
The representation $\chi^{-s}$ is extended to a representation of $Q_0$
by making it trivial on $N_0$.
For any manifold $M$, denote by $C^\infty(M,\C_{\chi^{-s}})$ the smooth
functions from $M$ to $\C_{\chi^{-s}}$. The group $G_0$ acts on the space
\beu
C^\infty_{\chi}(G_0/Q_0, \C_{\chi^{-s}}) = \{ F \in C^\infty(G_0, \C_{\chi^{-s}}) \; |\;
\text{$F(gq) = \chi^{-s}(q^{-1})F(g)$ for all $q \in Q_0$ and $g \in G_0$} \}
\end{equation*}
\vskip 0.1in
\noindent by left translation,
and the action $\Pi_s$ of $\fg$ on $C^\infty_\chi(G_0/Q_0, \C_{\chi^{-s}})$
arising from this action is given by
\begin{equation}\label{Eqn31}
(\Pi_s(Y) \acts F)(g) = \frac{d}{dt}F(\exp(-tY)g)\big|_{t=0}
\end{equation}
\vskip 0.1in
\noindent for $Y \in \fg_0$, where the dot $\bullet$ denotes the action of $\Pi_s(Y)$. 
This action is extended $\C$-linearly
to $\fg$ and then naturally to the universal enveloping algebra $\Cal{U}(\fg)$.
We use the same symbols for the extended actions.

The restriction map 
$C^\infty_{\chi}(G_0/Q_0, \C_{\chi^{-s}}) \to C^\infty(\bar N_0, \C_{\chi^{-s}})$
is an injection.
We may define the action 
of $\Cal{U}(\fg)$ on the image of the restriction map by 
$\Pi_s(u) \acts f = \big(\Pi_s(u) \acts F\big)|_{\bar N_0}$ for $u \in \Cal{U}(\fg)$ 
and $F \in C^\infty_{\chi}(G_0/Q_0, \C_{\chi^{-s}})$ with $f = F|_{\bar N_0}$. 
Define a right action
$R$ of $\Cal{U}(\bar \fn)$ on $C^\infty(\bar N_0, \C_{\chi^{-s}})$ by
\beu
\big( R(X) \acts f \big)(\nbar) = \frac{d}{dt} f\big( \nbar \exp(tX) \big) \big|_{t=0}
\end{equation*}
\vskip 0.1in

\noindent for $X \in \bar\fn_0$ and $f \in C^\infty(\bar N_0 ,\C_{\chi^{-s}})$. 
A direct computation shows that
\begin{equation}\label{Eqn21}
\big(\Pi_s(Y) \acts f \big)(\nbar) = -sd\chi \big( (\Ad(\nbar^{-1})Y)_\fq \big) f(\nbar) 
-\big( R \big( (\Ad(\nbar^{-1})Y)_{\bar \fn} \big) \acts f \big)(\nbar)
\end{equation}
\vskip 0.1in

\noindent for $Y \in \fg$ and $f$ in the image of the restriction map 
$C^\infty_{\chi}(G_0/Q_0, \C_{\chi^{-s}}) \to C^\infty(\bar N_0, \C_{\chi^{-s}})$. 
This equation implies that the representation $\Pi_s$ extends 
to a representation of $\Cal{U}(\fg)$ on 
the whole space $C^\infty(\bar{N}_0 , \C_{\chi^{-s}})$. 
Note that for all $Y \in \fg$,
the linear map $\Pi_s(Y)$ is in $C^\infty(\bar{N}_0) \oplus \eX(\bar{N}_0)$.
This property of $\Pi_s(Y)$ makes $\bar{N}_0$ a $\fg$-manifold. 

Let $\Cal{L}_{-s}$ be the trivial bundle of $\bar{N}_0$ with fiber $\C_{\chi^{-s}}$.
Then the space of smooth sections of $\Cal{L}_{-s}$
is identified with $C^\infty(\bar{N}_0 , \C_{\chi^{-s}})$. 
For $Y \in \fg$ and $f \in C^\infty(\bar{N}_0)$, 
a computation shows that in $\mathbb{D}(\Cal{L}_{-s})$,
\beu
\big( [\Pi_s(Y),f] \big)(\nbar) 
=-\big( R \big( (\Ad(\nbar^{-1})Y)_{\bar \fn} \big) \acts f \big)(\nbar).
\end{equation*}
This verifies that $\Pi_s$ gives $\Cal{L}_{-s}$ the structure of a $\fg$-bundle.

Now we define 
\beu
\mathbb{D}(\Cal{L}_{-s})^{\bar \fn}
=\{ D \in \mathbb{D}(\Cal{L}_{-s}) \; | \; [\Pi_s(X), D] = 0 
\text{ for all $X \in \bar \fn$} \}.
\end{equation*}
\vskip 0.1in

\begin{Prop}\cite[Proposition 13]{BKZ09}\label{Prop22}
Let $D_1, \ldots, D_m$ be a list of operators in $\mathbb{D}(\Cal{L}_{-s})^{\bar \fn}$.
Suppose that the list is linearly independent at $e$ and that there is a map
$b: \fg \to \f{gl}(m,\C)$ such that

\beu
\big([\Pi_s(Y), D_i] \acts f\big)(e) = \sum_{j=1}^m b(Y)_{ji}(D_j \acts f)(e)
\end{equation*} 
\vskip 0.1in
\noindent for all $Y \in \fg, \; f \in C^\infty(\bar{N}_0, \C_{\chi^{-s}})$, and $1\leq i \leq m$. 
Then $D_1, \ldots, D_m$ is a conformally invariant system on $\Cal{L}_{-s}$.
The structure operator of the system is given by 
$C(Y)(\nbar) = b(\emph{\Ad}(\nbar^{-1})Y)$ for all $\nbar \in \bar{N}_0$ and $Y \in \fg$.
\end{Prop}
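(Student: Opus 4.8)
The plan is to bootstrap the hypotheses, which are stated only at the point $e$, to the global conditions (S1) and (S2) by transporting everything with the left-translation action of $\bar N_0$. For $\bar m \in \bar N_0$ set $(\gl(\bar m)f)(\nbar) = f(\bar m^{-1}\nbar)$; this is defined on all of $C^\infty(\bar N_0, \C_{\chi^{-s}})$ and satisfies the evaluation identity $h(\nbar) = \big(\gl(\nbar^{-1})h\big)(e)$ for every $\nbar$. Two equivariance facts drive the argument. First, the covariance formula $\gl(\bar m)\,\Pi_s(Y)\,\gl(\bar m)^{-1} = \Pi_s(\Ad(\bar m)Y)$ for all $Y \in \fg$ and $\bar m \in \bar N_0$; this I would verify directly from \eqref{Eqn21} (it is also the restriction to the open dense subset $\bar N_0$ of the standard covariance on $C^\infty_\chi(G_0/Q_0,\C_{\chi^{-s}})$, where $\Pi_s(Y) = \tfrac{d}{dt}\gl(\exp tY)\big|_{t=0}$ for the $G_0$-action by left translation). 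Second, \eqref{Eqn21} shows that $\Pi_s(X) = \tfrac{d}{dt}\gl(\exp tX)\big|_{t=0}$ for $X \in \bar\fn_0$, so the defining relations $[\Pi_s(X), D_i] = 0$ ($X \in \bar\fn$) of $\mathbb{D}(\Cal L_{-s})^{\bar\fn}$ are equivalent, $\bar N_0$ being connected, to $\gl(\bar m)\,D_i\,\gl(\bar m)^{-1} = D_i$ for all $\bar m \in \bar N_0$: each $D_i$ is left $\bar N_0$-invariant.

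For (S1), note that left translation $\nbar \mapsto \bar m^{-1}\nbar$ is a diffeomorphism of $\bar N_0$, so conjugation by $\gl(\bar m)$ carries the total symbol of an operator at $\bar m^{-1}\nbar$ to its pushforward at $\nbar$, a linear isomorphism of symbol spaces. Applying this to the identity $D_i = \gl(\bar m)D_i\gl(\bar m)^{-1}$ at $\nbar = \bar m$ identifies the symbols of $D_1, \dots, D_m$ at $\bar m$ with the $\gl(\bar m)$-pushforwards of their symbols at $e$; linear independence at $e$ therefore forces linear independence at every $\bar m \in \bar N_0$.

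For (S2), fix $Y \in \fg$, $\nbar \in \bar N_0$ and $f \in C^\infty(\bar N_0, \C_{\chi^{-s}})$. Moving the base point to $e$ via the evaluation identity, conjugating the commutator by $\gl(\nbar^{-1})$ using the two equivariance facts, applying the hypothesis, and then using invariance of $D_j$ and the evaluation identity again, one obtains
\begin{align*}
\big([\Pi_s(Y), D_i] \acts f\big)(\nbar)
&= \big(\gl(\nbar^{-1})[\Pi_s(Y), D_i]\gl(\nbar) \acts \gl(\nbar^{-1})f\big)(e) \\
&= \big([\Pi_s(\Ad(\nbar^{-1})Y), D_i] \acts \gl(\nbar^{-1})f\big)(e) \\
&= \sum_{j=1}^m b\big(\Ad(\nbar^{-1})Y\big)_{ji}\,\big(D_j \acts \gl(\nbar^{-1})f\big)(e) \\
&= \sum_{j=1}^m b\big(\Ad(\nbar^{-1})Y\big)_{ji}\,\big(D_j \acts f\big)(\nbar),
\end{align*}
which is exactly $[\Pi_s(Y), D_i] = \sum_j C_{ji}(Y)D_j$ with $C(Y)(\nbar) = b\big(\Ad(\nbar^{-1})Y\big)$. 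To see that $C(Y)$ has smooth entries, observe that linear independence of $D_1,\dots,D_m$ at $e$ makes the functionals $f \mapsto (D_j\acts f)(e)$ linearly independent, so the hypothesis pins down $b(Y)$ uniquely; since its left-hand side is $\C$-linear in $Y$, so is $b$, and hence $\nbar \mapsto b(\Ad(\nbar^{-1})Y)$ is smooth. Together with (S1) this gives the conclusion.

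I expect the one genuinely delicate point to be the covariance formula of the first paragraph: it must hold on the whole space $C^\infty(\bar N_0, \C_{\chi^{-s}})$ — not merely on the image of the restriction map — in order to conjugate the operators $D_i$, and this is most safely established by a direct computation from \eqref{Eqn21}, tracking how $\Ad(\nbar^{-1})$ interacts with the decomposition $Y = Y_{\bar\fn} + Y_\fq$. Once that and the invariance $\gl(\bar m)D_i\gl(\bar m)^{-1} = D_i$ are in hand, the remaining steps are formal.
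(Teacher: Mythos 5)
The paper itself gives no proof of this statement---it is imported verbatim from \cite{BKZ09} (Proposition 13)---so there is no internal argument to compare against; judged on its own, your proof is correct and is the expected one: the covariance $\gl(\bar m)\,\Pi_s(Y)\,\gl(\bar m)^{-1}=\Pi_s(\Ad(\bar m)Y)$ does follow from (\ref{Eqn21}) by a direct computation on all of $C^\infty(\bar N_0,\C_{\chi^{-s}})$, the identification of $\Pi_s(X)$, $X\in\bar\fn$, with the generator of left translation uses exactly the fact that $\Ad(\nbar^{-1})$ preserves $\bar\fn$ (so the zeroth-order term in (\ref{Eqn21}) drops out), and connectedness of $\bar N_0$ lets you integrate $[\Pi_s(X),D_i]=0$ to $\gl(\bar m)D_i\gl(\bar m)^{-1}=D_i$, after which your four-line chain for (S2) and the uniqueness/linearity argument for $b$ (hence smoothness of $C(Y)$) are sound. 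One simplification: for (S1) no symbol calculus is needed---the invariance identity $(D_i\acts f)(\nbar)=(D_i\acts \gl(\nbar^{-1})f)(e)$, with $\gl(\nbar^{-1})$ a bijection of $C^\infty(\bar N_0,\C_{\chi^{-s}})$, carries linear independence of the functionals $(D_i)_e$ directly to linear independence of $(D_i)_{\nbar}$ at every point.
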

As shown on pp. 801-802 in \cite{BKZ09} the differential operators in 
$\mathbb{D}(\Cal{L}_{-s})^{\bar \fn}$ can be described in terms of 
elements of the generalized Verma module
\beu
\eM_\fq(\C_{sd\chi}) = \Cal{U}(\fg) \otimes_{\Cal{U}(\fq)}\C_{sd\chi},
\end{equation*}
where $\C_{sd\chi}$ is the $\fq$-module derived from 
the $Q_0$-representation $(\chi^{s}, \C)$. 
By identifying $\eM_\fq(\C_{sd\chi}) $ as $\Cal{U}(\bar\fn) \otimes \C_{sd\chi}$, 
the map $\eM_\fq(\C_{sd\chi})  \to \Cal{U}(\bar \fn)$ given by 
$u \otimes 1 \mapsto u$ is an isomorphism. 
The composition
\begin{equation}\label{Eqn23}
\eM_\fq(\C_{sd\chi})  \to \Cal{U}(\bar \fn) \to \mathbb{D}(\Cal{L}_{-s})^{\bar \fn}
\end{equation}
is then a vector-space isomorphism, where 
the map $\Cal{U}(\bar \fn) \to \mathbb{D}(\Cal{L}_{-s})^{\bar \fn}$ is given by
$u \mapsto R(u)$.

Suppose that $f \in C^\infty(\bar{N}_0,\C_{\chi^{-s}})$ and $l \in L_0$. Then we define 
an action of $L_0$ on $C^\infty(\bar{N}_0,\C_{\chi^{-s}})$ by
\beu
(l \cdot f)(\nbar) = \chi^{-s}(l)f(l^{-1}\nbar l).
\end{equation*} 
This action agrees with the action of $L_0$ by left translation on the image 
of the restriction map $C^\infty_{\chi}(G_0/Q_0, \C_{\chi^{-s}}) \to 
C^\infty(\bar N_0, \C_{\chi^{-s}})$.
In terms of this action we define an action of $L_0$ on $\mathbb{D}(\Cal{L}_{-s})$ by
\beu
(l \cdot D) \acts f = l \cdot \big(D \acts (l^{-1} \cdot f)\big).
\end{equation*}
\noindent One can check that we have $l \cdot R(u) = R(\Ad(l)u)$ for $l \in L_0$ and 
$u \in \Cal{U}(\bar \fn)$; in particular, this $L_0$-action stabilizes the subspace
$\mathbb{D}(\Cal{L}_{-s})^{\bar \fn}$. We define an action of $L_0$ 
on $\eM_\fq(\C_{sd\chi}) $ by $l \cdot (u \otimes z) = \Ad(l)u \otimes z$.
With these actions,
the isomorphism (\ref{Eqn23}) is $L_0$-equivariant.
For $D\in \mathbb{D}(\Cal{L}_{-s})$, we denote by $D_{\nbar}$ the linear functional
$f \mapsto (D\acts f)(\nbar)$ for $f \in C^\infty(\Nbar_0, \C_{\chi^{-s}})$. 
The following result is the specialization of Theorem 15 in \cite{BKZ09}
to the present situation.

\begin{Thm}\label{Thm24}
Suppose that $F$ is a finite-dimensional $\fq$-submodule of 
the generalized Verma module $\eM_\fq(\C_{sd\chi}) $. Let $f_1, \ldots, f_k$ be a basis of $F$
and define constants $a_{ri}(Y)$ by
\beu
Yf_i = \sum_{r=1}^k a_{ri}(Y)f_r
\end{equation*}
for $1\leq i \leq k$ and $Y \in \fq$. Let $D_1, \ldots, D_k \in 
\mathbb{D}(\Cal{L}_{-s})^{\bar \fn}$ correspond to the elements 
$f_1, \ldots, f_k \in F$. Then
for all $Y \in \fg$, $1\leq i \leq k$, and $\nbar \in \Nbar_0$,
\beu
[\Pi_s(Y), D_i]_{\nbar} 
= \sum_{r=1}^k a_{ri}\big( (\emph{Ad}(\nbar ^{-1})Y)_\fq\big) (D_r)_{\nbar}
-sd\chi\big( (\emph{Ad}(\nbar^{-1})Y)_\fq\big) (D_i)_{\nbar}.
\end{equation*}
\end{Thm}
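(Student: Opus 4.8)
\textbf{Proof proposal for Theorem \ref{Thm24}.}

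The plan is to compute $[\Pi_s(Y), D_i]_{\nbar}$ directly from the explicit
formula \eqref{Eqn21} for the action $\Pi_s$, together with the description of
$D_i$ as a right-invariant operator attached to an element of the generalized
Verma module. First I would recall that under the isomorphism \eqref{Eqn23}
the operator $D_i$ corresponds to $f_i \in F \subset \eM_\fq(\C_{sd\chi})$,
and that the pairing $D \mapsto D_{\nbar}$ is essentially the evaluation that
makes \eqref{Eqn23} work; the content of Theorem 15 of \cite{BKZ09} is that
the bracket of $\Pi_s(Y)$ with $R(u)$ can be re-expressed on the
level of $\eM_\fq(\C_{sd\chi})$. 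So the first step is to reduce the claim to a
statement about how $Y \in \fg$ acts on the submodule $F$ after transporting
$Y$ by $\Ad(\nbar^{-1})$: the term $(\Ad(\nbar^{-1})Y)_{\bar\fn}$ contributes
nothing to the bracket at $\nbar$ because $D_i \in \mathbb{D}(\Cal{L}_{-s})^{\bar\fn}$
commutes with all of $R(\bar\fn)$, while the term $(\Ad(\nbar^{-1})Y)_{\fq}$ is
precisely where the $\fq$-module structure of $F$ enters.

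Next I would unwind the two pieces of \eqref{Eqn21} separately. Writing
$Z = \Ad(\nbar^{-1})Y$ and splitting $Z = Z_{\bar\fn} + Z_{\fq}$, the
scalar part $-sd\chi(Z_{\fq})$ is a multiplication operator, so its contribution
to $[\Pi_s(Y), D_i]_{\nbar}$ is $-sd\chi(Z_{\fq})(D_i)_{\nbar}$; this is the
last term in the asserted identity. For the vector-field part
$-R(Z_{\bar\fn})$, one has to be slightly careful because $\nbar \mapsto
Z_{\bar\fn}$ is itself a function of $\nbar$, so differentiating picks up both
the commutator $[R(Z_{\bar\fn}), D_i]$, which vanishes, and a derivative-of-coefficient
term coming from the $\nbar$-dependence of $\Ad(\nbar^{-1})Y$. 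Tracking that
derivative term is exactly what produces the coefficients
$a_{ri}((\Ad(\nbar^{-1})Y)_{\fq})$: the infinitesimal variation of
$(\Ad(\nbar^{-1})Y)_{\fq}$ in the $\bar\fn$-directions is governed by the
bracket of $\bar\fn$ with $Z_{\fq}$, and under the isomorphism \eqref{Eqn23}
this is read off from the action of $\fq$ on $F$ via the constants
$a_{ri}(\cdot)$. This is the step I expect to be the main obstacle — keeping
the bookkeeping straight between the Bruhat-coordinate derivatives on
$\bar N_0$ and the algebraic action of $\fq$ on $F$, and checking that the
finite-dimensionality and $\fq$-stability of $F$ are what make the sum close
up on $D_1, \ldots, D_k$ rather than spilling outside the span.

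Since Theorem \ref{Thm24} is stated in the excerpt as ``the specialization of
Theorem 15 in \cite{BKZ09} to the present situation,'' the cleanest route is in
fact to quote that general theorem and simply match notation: verify that the
line bundle $\Cal{L}_{-s}$, the module $\eM_\fq(\C_{sd\chi})$, the
correspondence $f_i \leftrightarrow D_i$, and the evaluation $D \mapsto D_{\nbar}$
here are the instances of the objects appearing in \cite[Theorem 15]{BKZ09},
and that the character normalization $\chi^{-s}$ versus $sd\chi$ matches the
sign conventions fixed earlier in this section. Under that matching the
displayed formula is literally the conclusion of \cite[Theorem 15]{BKZ09}, so
no further computation is needed; the only real work is the compatibility
check, and the sign on the $sd\chi$ term is the place where a convention
mismatch would show up, so I would confirm it against \eqref{Eqn21} as a final
sanity check.
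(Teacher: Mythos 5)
Your final route --- simply quoting Theorem 15 of \cite{BKZ09} and checking that $\Cal{L}_{-s}$, $\eM_\fq(\C_{sd\chi})$, the correspondence $f_i \leftrightarrow D_i$, and the evaluation $D \mapsto D_{\nbar}$ are the instances of the objects there --- is exactly what the paper does: it gives no independent argument and records Theorem \ref{Thm24} as the specialization of that general result, so your proposal matches the paper's approach. Your preliminary direct-computation sketch is dispensable and a bit loose (in particular, the multiplication-operator part of $\Pi_s(Y)$ does not commute with the higher-order operator $D_i$, so its contribution at $\nbar$ is not simply $-sd\chi\big((\Ad(\nbar^{-1})Y)_\fq\big)(D_i)_{\nbar}$ in isolation; those derivative terms are part of what the $a_{ri}$ coefficients absorb), but since you ultimately defer to the citation this does not affect the correctness of the proposal.
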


\section{The $\Omega_3^t$ system}\label{Section4}

Let $G$ be a complex simple Lie group with Lie algebra $\fg$ simply laced.
In this section we specialize to the situation where $G_0$ is a real form
of $G$ that contains a real parabolic subgroup of Heisenberg type.
In this setting, we construct a system of differential operators
over the line bundle $\Cal{L}_{-s}$ and show some technical facts
that will be used later sections. We first introduce some notation.

Choose a Cartan subalgebra $\fh$ of $\fg$
and let $\gD$ be the set of roots of $\fg$ with respect to $\fh$. 
Fix $\gD^+$ a positive system and denote by $S$ 
the corresponding set of simple roots.
Write $\rho$ for half the sum of the positive roots.
We denote the highest root by $\gamma$.
Let $B_\fg$ denote a positive multiple of the Killing form on $\fg$ and denote
by $(\cdot, \cdot)$ the corresponding inner product induced on $\fh^*$.
The normalization of $B_\fg$ will be specified below.
Let us write $||\ga||^2 = (\ga, \ga)$ for any $\ga \in \gD$.
For $\ga \in \gD$, we let $\fg_{\ga}$ 
be the root space of $\fg$ corresponding to $\ga$.
For any $\ad(\fh)$-invariant subspace $V \subset \fg$,
we denote by $\gD(V)$ the set of roots $\ga$ so that $\fg_\ga \subset V$.

It is known that we can choose $X_\ga \in \fg_\ga$ and $H_\ga \in \fh$ for each $\ga \in \gD$
in such a way that the following conditions (C1)-(C5) hold. 
The reader may want to note that our normalizations are special cases of
those used in \cite{BKZ08}.
(C1) For each $\ga \in \gD^+$,  $\{X_{-\ga}, H_{\ga},X_\ga \}$ is an $\f{sl}(2)$-triple.
In particular, 
$[X_\ga, X_{-\ga}] = H_\ga$.
(C2) For each $\ga, \gb \in \gD$, $[H_\ga, X_\gb] = \gb(H_\ga)X_\gb$.
(C3) For $\ga \in \gD$ we have $B_\fg(X_\ga, X_{-\ga})$ = 1; in particular,
$(\ga, \ga) = 2$.
(C4) For $\ga,\gb \in \gD$ we have 
$\gb(H_\ga) = (\gb,\ga)$. 
(C5) If $\ga$, $\gb$, $\ga+\gb \in \gD$ then there is a non-zero integer 
$N_{\ga, \gb}$ so that $[X_\ga, X_\gb] = N_{\ga, \gb}X_{\ga+\gb}$.
For $Z \in \fl$ and $\ga \in \gD(V^+)$, we define a scalar $M_{\ga, \gb}(Z)$ by
$[Z, X_\ga] = \sum_{\gb \in \gD(V^+)}M_{\ga,\gb}(Z)X_\gb$.

Let $\fq=\fl \oplus \fn$ be the standard parabolic subalgebra of $\fg$ of Heisenberg type
with $\fl$ its Levi factor and $\fn$ its nilpotent radical.
Then, as stated in Introduction,
the action of $\ad(H_\gamma)$ on $\fg$ 
induces the eigenspace decomposition $(\ref{Intro1})$ of $\fg$,
where $\gamma$ is the highest root of $\fg$.
Since $\fz(\fn)=\fg_{\gamma}$ is one-dimensional, 
there is a character $\chi$ of $L_0$ so that 
$\Ad(l)X_{\gamma} = \chi(l)X_\gamma$ for all $l \in L_0$.
Note that $\Ad(l)X_{-\gamma} = \chi(l)^{-1}X_{-\gamma}$
for all $l \in L_0$, as $\fg_{-\gamma}$ is the $B_\fg$-dual space 
of $\fg_\gamma$.
For the rest of this paper,
we fix $\chi$ so that its differential $d\chi$ is $d\chi = \gamma$.

Let $\eD_\gamma(\fg, \fh)$ be the deleted Dynkin diagram 
associated to the Heisenberg parabolic $\fq$; 
that is, the subdiagram of the Dynkin diagram of $(\fg, \fh)$ obtained by deleting
the node corresponding to the simple root that is not orthogonal to $\gamma$,
and the edges that involve it.

As on p.789 in \cite{BKZ08} the operator $\Omega_2$ is given in terms of $R$ by  
\begin{equation}\label{Eqn41}
\Omega_2(Z) =  -\frac{1}{2}\sum_{\ga, \gb \in \gD(V^+)}
N_{\gb, \gb'}M_{\ga, \gb'}(Z) R(X_{-\ga})R(X_{-\gb})
\end{equation}
\noindent for $Z \in \fl$.
One can check that we have 
$\Omega_2(\Ad(l)Z) = \chi(l) l \cdot \Omega_2(Z)$
for all $l \in L_0$.
Note that this 
is different from the $\Ad(l)$ transformation law of $\Omega_2$
that appears in \cite{BKZ08},
because the parabolic $\fq$ is chosen in this paper, 
while the opposite parabolic 
$\bar \fq$ is chosen in \cite{BKZ08}.
We extend the $\C$-linear maps $d\chi$, $R$, and $\Omega_2$ to be left $C^\infty(\bar N_0)$-linear so that certain relationships can be expressed more easily.

Now for $t \in \C$ we define 
an operator $\Omega^t_3: V^- \to \mathbb{D}(\Cal{L}_{-s})^{\bar \fn}$ by
\begin{equation*}
\Omega_3^t(Y) = \tilde{\Omega}_3(Y) + tC_3(Y),
\end{equation*}
where the operators $\tilde{\Omega}_3(Y)$ and $C_3(Y)$  
are defined in terms of $R$ and $\Omega_2$ by
\begin{equation*}
\tilde{\Omega}_3(Y) = \sum_{\ge \in \gD(V^+)} R(X_{-\ge})\Omega_2\big( [X_\ge, Y] \big)
\end{equation*}
and
\begin{equation*}
C_3(Y) = R(Y)R(X_{-\gamma})
\end{equation*}
as on p.801 of \cite{BKZ08}. 

\begin{Lem}\label{Lem30}
Let $W_1, \ldots, W_m$ be a basis for $V^+$ and 
$W_1^*, \ldots, W_m^*$ be the $B_\fg$-dual basis of $V^-$.
Then
\begin{equation*}
\tilde{\Omega}_3(Y) = \sum_{i=1}^m R(W_i^*)\Omega_2\big( [W_i, Y] \big).
\end{equation*}
\end{Lem}

\begin{proof}
Suppose that $\gD(V^+) = \{ \ge_1, \ldots, \ge_m\}$.
Each $W_i$ then may be expressed by 
$W_i = \sum_{j = 1}^m a_{ij}X_{\ge_j}$
for $a_{ij} \in \C$.
Let $[a_{ij}]$ be the change of basis matrix 
and set $[b_{ij}] = [a_{ij}]^{-1}$. 
Then define 
$W^*_i = \sum_{k=1}^mb_{ki}X_{-\ge_k}$
for $i=1,\ldots, m$. 
Note that $\sum_{s=1}^m a_{is}b_{sj} = \gd_{ij}$
with $\gd_{ij}$ the Kronecker delta.
Since $B_\fg(X_{\ge_i}, X_{-\ge_j}) = \gd_{ij}$,
it follows that 
$B_\fg(W_i, W_j^*) = \gd_{ij}$. 
Thus $\{W_1^*, \ldots, W_m^*\}$
is the dual basis of $\{W_1,\ldots, W_m\}$. 
Hence,
\begin{equation*}
\sum_{i=1}^m R(W_i^*)\Omega_2([W_i, Y])
=\sum_{j,k=1}^m\big( \sum_{i=1}^m b_{ki}a_{ij}\big)R(X_{-\ge_k})\Omega_2([X_{\ge_j},Y])\\
= \sum_{j=1}^m R(X_{-\ge_j})\Omega_2([X_{\ge_j}, Y]).
\end{equation*}
\end{proof}

\begin{Lem}\label{Lem31}
For all $l \in L_0$, $Z\in \fl$, and $Y \in V^-$, we have
\begin{equation*}\label{Eqn32}
\Omega^t_3(\emph{\Ad}(l)Y) 
= \chi(l) l \cdot \Omega^t_3(Y)
\end{equation*} 
and
\beu
\Omega^t_3\big([Z,Y]\big) 
=  d\chi(Z)\Omega^t_3(Y) + [\Pi_s(Z), \Omega^t_3(Y)].
\end{equation*}
\vskip 0.1in
\end{Lem}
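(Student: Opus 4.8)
The plan is to verify the two identities of Lemma \ref{Lem31} by reducing each to the corresponding transformation law for $\Omega_2$ (already recorded in Section \ref{Section4}) together with the elementary equivariance properties of the right action $R$. The first identity, $\Omega_3^t(\Ad(l)Y) = \chi(l)\, l \cdot \Omega_3^t(Y)$, I would prove by checking it separately on the two summands $\tilde\Omega_3$ and $C_3$. For $C_3(Y) = R(Y)R(X_{-\gamma})$, use $l \cdot R(u) = R(\Ad(l)u)$ (stated in Section \ref{Section3}) and $\Ad(l)X_{-\gamma} = \chi(l)^{-1}X_{-\gamma}$ (recorded after \eqref{Intro1}); these combine to give $\chi(l)\, l \cdot C_3(Y) = R(\Ad(l)Y)R(X_{-\gamma}) = C_3(\Ad(l)Y)$, with the factors of $\chi(l)^{\pm1}$ cancelling. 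For $\tilde\Omega_3$ I would use the basis-free form from Lemma \ref{Lem30}, $\tilde\Omega_3(Y) = \sum_i R(W_i^*)\Omega_2([W_i,Y])$: applying $l\cdot$ and using $l\cdot R(W_i^*) = R(\Ad(l)W_i^*)$ together with $\Omega_2(\Ad(l)Z) = \chi(l)\, l\cdot\Omega_2(Z)$, one gets $\chi(l)\,l\cdot\tilde\Omega_3(Y) = \sum_i R(\Ad(l)W_i^*)\Omega_2(\Ad(l)[W_i,Y]) = \sum_i R(\Ad(l)W_i^*)\Omega_2([\Ad(l)W_i,\Ad(l)Y])$. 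Since $\{\Ad(l)W_i\}$ is again a basis of $V^+$ (as $V^+$ is $\fl$-stable) with $B_\fg$-dual basis $\{\Ad(l)W_i^*\}$ (because $\Ad(l)$ preserves $B_\fg$), Lemma \ref{Lem30} identifies this sum with $\tilde\Omega_3(\Ad(l)Y)$.

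The second identity is the infinitesimal version of the first, so the cleanest route is to differentiate. I would set $l = \exp(tZ)$ for $Z \in \fl$, apply the first identity, and differentiate at $t = 0$. On the left the derivative of $\Omega_3^t(\Ad(\exp(tZ))Y)$ is $\Omega_3^t([Z,Y])$ by $\C$-linearity of $\Omega_3^t$. On the right, the derivative of $\chi(\exp(tZ))$ at $t=0$ is $d\chi(Z)$, contributing $d\chi(Z)\,\Omega_3^t(Y)$; and the derivative of $\exp(tZ)\cdot \Omega_3^t(Y)$ is, by the definition of the $L_0$-action on $\mathbb{D}(\Cal{L}_{-s})$ via $(l\cdot D)\acts f = l\cdot(D\acts(l^{-1}\cdot f))$ and the fact that the infinitesimal action of $Z\in\fl$ on sections is $\Pi_s(Z)$, equal to the commutator $[\Pi_s(Z), \Omega_3^t(Y)]$. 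Adding these gives the claimed formula. One should note that this differentiation step is legitimate because everything in sight is polynomial/smooth in the relevant parameters; alternatively, if one prefers to avoid an analytic argument, the same identity can be derived purely algebraically by applying Theorem \ref{Thm24} or by a direct commutator computation using \eqref{Eqn21}, though that is longer.

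The only genuinely delicate point is the bookkeeping of the $\chi(l)$ factors and the verification that $\{\Ad(l)W_i^*\}$ is indeed the $B_\fg$-dual basis of $\{\Ad(l)W_i\}$ — this uses that $B_\fg$ is $\Ad$-invariant and that $\Ad(l)$ maps $V^\pm$ to $V^\pm$ (since $L_0$ centralizes $H_\gamma$ up to the grading, i.e.\ preserves each $\ad(H_\gamma)$-eigenspace). Once that is in hand, Lemma \ref{Lem30} does the real work of making the first identity coordinate-independent, and the second identity is a formal consequence. I expect no serious obstacle beyond keeping the $L_0$-equivariance conventions consistent, particularly the twist by $\chi(l)$ that distinguishes this setup from \cite{BKZ08}; the main thing to be careful about is that the $L_0$-action on $\mathbb{D}(\Cal{L}_{-s})$ and on $\eM_\fq(\C_{sd\chi})$ used here already incorporates the convention change to the parabolic $Q_0$, so the transformation law for $\Omega_2$ quoted above (with its $\chi(l)$ factor) is the correct one to feed in.
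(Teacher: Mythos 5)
Your proposal is correct and follows essentially the same route as the paper: prove the first identity separately for $C_3$ (using $l \cdot R(u) = R(\Ad(l)u)$ and the $\chi(l)^{\mp 1}$ factors cancelling) and for $\tilde\Omega_3$ (using the $\Omega_2$ transformation law together with the basis-independence supplied by Lemma \ref{Lem30}), then obtain the second identity by differentiating the first along $l=\exp(tZ)$. Your added remarks about $\Ad(l)$ preserving $B_\fg$ and the $\ad(H_\gamma)$-eigenspaces simply make explicit what the paper's appeal to Lemma \ref{Lem30} leaves implicit, so there is no substantive difference.
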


\begin{proof}
To obtain the first equality
it suffices to show that $\tOmega_3$ and $C_3$ have the proposed 
transformation law. 
Recall that $l \cdot R(u) = R(\Ad(l)u)$ 
for $l \in L_0$ and $u \in \Cal{U}(\bar\fn)$; in particular, 
we have $l \cdot R(X_{-\gamma}) = \chi(l)^{-1}R(X_{-\gamma})$. 
Therefore $\chi(l)l\cdot C_3(Y) = R(\Ad(l)Y)R(X_{-\gamma})$,
which is $C_3(\Ad(l)Y)$.
Note that since we have $\Omega_2(\Ad(l)W) = \chi(l)l\cdot \Omega_2(W)$ 
for $l \in L_0$ and $W\in \fl$,
it follows that
\begin{equation}\label{Eqn33}
\chi(l) l \cdot \tOmega_3(Y)
= \sum_{\ge \in \gD(V^+)} R(\Ad(l)X_{-\ge})\Omega_2\big( [\Ad(l)X_\ge, \Ad(l)Y] \big).
\end{equation}
By Lemma \ref{Lem30}, the value of $\tilde{\Omega}_3(Y)$ is independent 
from a choice of a basis for $V^+$. Therefore
the right hand side of (\ref{Eqn33}) is equal to the sum
$\sum_{\ge \in \gD(V^+)} R(X_{-\ge})\Omega_2\big( [X_\ge, \Ad(l)Y] \big)$,
which is $\tOmega_3(\Ad(l)Y)$. 
The second equality is obtained by differentiating the first.
\end{proof}

Let $\omega_3^t(Y)$ denote the element in 
$\Cal{U}(\bar \fn) \otimes \C_{sd\chi}$ that corresponds to
$\Omega^t_3(Y)$ under $R$.

\begin{Lem}\label{Lem42}
For $Z \in \fl$ and $Y \in V^-$, we have
\beu
\omega^t_3\big( [Z, Y] \big) 
= Z \omega^t_3(Y) + (1-s)d\chi(Z)\omega^t_3(Y).
\end{equation*}
\end{Lem}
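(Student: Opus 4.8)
The plan is to deduce Lemma \ref{Lem42} from the second equation of Lemma \ref{Lem31} by translating the operator identity in $\mathbb{D}(\Cal{L}_{-s})^{\bar\fn}$ into the corresponding identity in $\Cal{U}(\bar\fn)\otimes\C_{sd\chi}$ under the isomorphism (\ref{Eqn23}). Recall that for $u\in\Cal{U}(\bar\fn)$, the operator $R(u)$ corresponds to $u\otimes 1$, and that this map is $L_0$-equivariant, so its differential intertwines the $\fl$-action $Z\cdot(u\otimes 1)=\Ad(Z)u\otimes 1$ on the module side with the bracket action on the operator side. The one point that needs care is how the term $[\Pi_s(Z),\Omega_3^t(Y)]$ gets translated: $\Pi_s(Z)$ is not itself of the form $R(u)$, so I cannot simply invoke $L_0$-equivariance of (\ref{Eqn23}) as a black box.

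First I would pin down the effect of $\mathrm{ad}(\Pi_s(Z))$ on an element $R(u)\in\mathbb{D}(\Cal{L}_{-s})^{\bar\fn}$. The cleanest route is to use (\ref{Eqn21}): evaluating at $\nbar=e$, $\Pi_s(Z)$ acts on $f\in C^\infty(\bar N_0,\C_{\chi^{-s}})$ by $-sd\chi(Z)f(e)$ plus a first-order term coming from $R$ of the $\bar\fn$-component of $\Ad(\nbar^{-1})Z$, which vanishes at $e$. Differentiating the $L_0$-conjugation formula $l\cdot R(u)=R(\Ad(l)u)$ at $l=\exp(tZ)$ shows that the ``geometric'' part of $[\Pi_s(Z),R(u)]$ contributes $R(\mathrm{ad}(Z)u)$, i.e. $Zu\otimes 1$ on the module side, while the scalar part $-sd\chi(Z)$ in the $L_0$-action on $\mathbb{D}(\Cal{L}_{-s})$ (the factor $\chi^{-s}(l)$ in $(l\cdot f)$) must be accounted for separately. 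Concretely, the identity $(l\cdot D)\acts f=l\cdot(D\acts(l^{-1}\cdot f))$ differentiated in $Z$ gives $[\Pi_s(Z),D]=\tfrac{d}{dt}\big|_{t=0}(\exp(tZ)\cdot D)$, and the $L_0$-action on $\mathbb{D}(\Cal{L}_{-s})$ restricted to $\mathbb{D}(\Cal{L}_{-s})^{\bar\fn}$ matches the module action $l\cdot(u\otimes z)=\Ad(l)u\otimes z$ only up to the scalar $\chi^{-s}$ built into the bundle; tracking this yields $[\Pi_s(Z),R(u)]\leftrightarrow Zu\otimes 1 - sd\chi(Z)\,u\otimes 1$ (the latter being the infinitesimal form of the $\C_{sd\chi}$-twist). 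Actually the reference computation ``$([\Pi_s(Y),f])(\nbar)=-(R((\Ad(\nbar^{-1})Y)_{\bar\fn})\acts f)(\nbar)$'' together with the equivariance of (\ref{Eqn23}) is exactly set up to make this bookkeeping routine; I would cite pp.~801--802 of \cite{BKZ09} and Theorem \ref{Thm24} with $Y=Z\in\fl$ to get the precise correspondence.

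Granting this dictionary, the computation is immediate: apply it to $u=\omega_3^t(Y)$, so that $[\Pi_s(Z),\Omega_3^t(Y)]$ corresponds to $Z\omega_3^t(Y)-sd\chi(Z)\omega_3^t(Y)$. The second equation of Lemma \ref{Lem31} reads $\Omega_3^t([Z,Y])=d\chi(Z)\Omega_3^t(Y)+[\Pi_s(Z),\Omega_3^t(Y)]$; translating both sides under $R$ gives $\omega_3^t([Z,Y])=d\chi(Z)\omega_3^t(Y)+Z\omega_3^t(Y)-sd\chi(Z)\omega_3^t(Y)=Z\omega_3^t(Y)+(1-s)d\chi(Z)\omega_3^t(Y)$, which is the claim.

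The main obstacle is purely the sign- and scalar-bookkeeping in the previous paragraph: making sure the $\chi^{-s}$ twist in the $L_0$-action on sections, the $-sd\chi$ term in (\ref{Eqn21}), and the shift by $d\chi(Z)$ coming from $\Omega_3^t$ being valued in $V^-$-indexed operators (equivalently, the $\chi(l)$ factor in the transformation law of Lemma \ref{Lem31}) all combine to give the coefficient $1-s$ rather than, say, $-s$ or $1+s$. I expect no conceptual difficulty beyond this; the cleanest presentation is to invoke Theorem \ref{Thm24} directly with $Y$ replaced by an element of $\fl$, since that theorem already encodes the precise relation between $[\Pi_s(Y),D_i]$ and the module action together with the $-sd\chi$ correction.
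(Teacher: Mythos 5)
Your proposal is correct and is essentially the paper's own argument: the paper transfers the group-level law $\Omega_3^t(\Ad(l)Y)=\chi(l)\,l\cdot\Omega_3^t(Y)$ of Lemma \ref{Lem31} to the module side (via Lemma 18 of \cite{BKZ09}, i.e.\ exactly the $L_0$-equivariance $l\cdot R(u)=R(\Ad(l)u)$ that you re-derive) to get $\omega_3^t(\Ad(l)Y)=\chi(l)\Ad(l)\omega_3^t(Y)$, and then differentiates at $l=\exp(tZ)$, the coefficient $1-s$ emerging from the same identity $\ad(Z)u\otimes 1=Zu\otimes 1-s\,d\chi(Z)\,u\otimes 1$ that underlies your dictionary. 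Two small caveats only: the $-s\,d\chi(Z)$ term is the infinitesimal $\C_{sd\chi}$-twist in $\eM_\fq(\C_{sd\chi})$ (as your parenthetical says), not the $\chi^{-s}(l)$ factor in the action on sections, which cancels in the conjugation action on operators; and Theorem \ref{Thm24} cannot be invoked at this stage since it presupposes a $\fq$-submodule, but your direct derivation of the commutator dictionary makes that citation unnecessary.
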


\begin{proof}
Lemma \ref{Lem31} shows that $\Omega^t_3(\Ad(l)Y) = \chi(l)l\cdot \Omega^t_3(Y)$ for $l \in L_0$. Thus it follows from Lemma 18 in
\cite{BKZ09} that 
we have $\omega^t_3(\Ad(l)Y) = \chi(l)\Ad(l)\omega^t_3(Y)$.
The formula is then obtained by replacing $l$ by $\exp(tZ)$ with $Z \in \fl_0$,
differentiating, and setting at $t = 0$.
\end{proof}

Let $E$ be an irreducible $L_0$-submodule of $V^-$.
We say that the $\Omega_3\big|_E$ system exists 
if there exist $t_0, s_0 \in \C$
so that the list of differential operators
$\Omega^{t_0}_3\big|_E = 
\Omega^{t_0}_3(X_{\gb_1}), \ldots, \Omega^{t_0}_3(X_{\gb_m})$ with
$\gD(E) = \{\gb_1, \ldots, \gb_m\}$
is conformally invariant over $\Cal{L}_{-s_0}$.
Set $F_t(E) = 
\text{span}_\C\{\omega_3^t(Y) \; | \; Y \in E\}$.

\begin{Lem}\label{Lem43}
If the $\Omega^{t}_3 \big|_E$ system is conformally invariant 
for $t=t_0$ over $\Cal{L}_{-s_0}$ then 
$\fn$ acts on $F_{t_0}(E)$ trivially.
\end{Lem}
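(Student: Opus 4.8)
The plan is to exploit the two transformation laws already established: Theorem \ref{Thm24}, which computes the bracket $[\Pi_s(Y), D_i]_{\nbar}$ in terms of the $\fq$-action on a finite-dimensional submodule, together with Lemma \ref{Lem42}, which tells us how $\omega_3^t$ intertwines the $\fl$-action. The subspace $F_{t_0}(E) \subseteq \Cal{U}(\bar\fn) \otimes \C_{s_0 d\chi}$ is, by construction, spanned by the vectors $\omega_3^{t_0}(Y)$ with $Y \in E$; by Lemma \ref{Lem42} it is stable under $\fl$ (up to the scalar twist $(1-s_0)d\chi(Z)$, which is harmless since $F_{t_0}(E)$ is a subspace), and one checks it is a $\fq$-submodule of $\eM_\fq(\C_{s_0 d\chi})$ once we know $\fn$ acts on it — so the point is really to show the $\fn$-action is zero rather than merely that $F_{t_0}(E)$ is $\fn$-stable. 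Since $\fn = V^+ \oplus \fz(\fn)$ and $\fz(\fn) = [V^+, V^+]$ by the Heisenberg structure, it suffices to show that each $X_\ge$ with $\ge \in \gD(V^+)$ annihilates every $\omega_3^{t_0}(Y)$.

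First I would translate conformal invariance into module language. By Proposition \ref{Prop22} and Theorem \ref{Thm24}, the list $\Omega_3^{t_0}\big|_E$ being conformally invariant over $\Cal{L}_{-s_0}$ means precisely that for every $Y \in \fg$ there is a matrix $b(Y)$ with $[\Pi_{s_0}(Y), D_i]_{\nbar} = \sum_r b(\Ad(\nbar^{-1})Y)_{ri} (D_r)_{\nbar}$, i.e.\ the bracket stays \emph{inside} the span of the $D_r$'s at $e$. Specializing $\nbar = e$ and using Theorem \ref{Thm24} with the finite-dimensional space $F := F_{t_0}(E)$ (choosing the $D_i$ to be exactly the operators $R(\omega_3^{t_0}(X_{\gb_i}))$ corresponding to a basis of $F$), conformal invariance forces: for every $Y \in \fg$, the element $Y \cdot \omega_3^{t_0}(X_{\gb_i}) \in \eM_\fq(\C_{s_0 d\chi})$ — more precisely the relevant $\fq$-component — lies in $F$. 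Taking $Y = X_\ge \in V^+ \subset \fn$, the $\fq$-part of the relevant expression is simply $X_\ge \cdot \omega_3^{t_0}(X_{\gb_i})$, so conformal invariance says this lies in $F_{t_0}(E)$; that is, $\fn$ at least \emph{preserves} $F_{t_0}(E)$.

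Next I would upgrade ``preserves'' to ``annihilates'' using a grading/weight argument. The key observation is that $\ad(H_\gamma)$ gives $\eM_\fq(\C_{s_0 d\chi}) \cong \Cal{U}(\bar\fn) \otimes \C_{s_0 d\chi}$ a grading in which $\omega_3^{t_0}(Y)$, coming from a third-order operator built out of $R(X_{-\ge})$'s (each lowering the $\ad(H_\gamma)$-eigenvalue) acting on a weight vector $Y \in V^-$ (eigenvalue $-1$), sits in a low eigenspace — concretely, $\tOmega_3(Y)$ is a sum of terms $R(X_{-\ge})R(X_{-\ga})R(X_{-\gb})$ and $C_3(Y) = R(Y)R(X_{-\gamma})$, all of total $\ad(H_\gamma)$-weight $-3$, sitting in $\fg(-3)$-type pieces of $\Cal{U}(\bar\fn)$. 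Since $X_\ge \in \fg(1)$ raises the eigenvalue by $1$, and since $F_{t_0}(E)$ is concentrated in a single $\ad(H_\gamma)$-eigenvalue $-3$, the element $X_\ge \cdot \omega_3^{t_0}(Y)$ lands in eigenvalue $-2$; but $F_{t_0}(E)$ has no component there. Combined with the previous step (which says $X_\ge \cdot \omega_3^{t_0}(Y) \in F_{t_0}(E)$), we conclude $X_\ge \cdot \omega_3^{t_0}(Y) = 0$ for all $\ge \in \gD(V^+)$. Then $\fz(\fn) = [X_\ga, X_\gb]$-span also annihilates $F_{t_0}(E)$, since $X_{[\ga,\gb]}\cdot v = X_\ga X_\gb \cdot v - X_\gb X_\ga \cdot v = 0$. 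Hence all of $\fn$ acts trivially on $F_{t_0}(E)$.

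The main obstacle I anticipate is making the bookkeeping in the grading step airtight: I must be sure that \emph{every} vector in $F_{t_0}(E)$ — not just the obvious spanning generators $\omega_3^{t_0}(X_{\gb_i})$ — has pure $\ad(H_\gamma)$-eigenvalue $-3$, and that the action of $X_\ge$ really is the ``raising'' action under the isomorphism $\eM_\fq(\C_{s_0 d\chi}) \cong \Cal{U}(\bar\fn)$, with no hidden contribution from the $\C_{s_0 d\chi}$ factor (the scalar twist affects only $\fh$- and $\fz(\fn)$-weights additively, which does not disturb the eigenspace-separation argument). A cleaner alternative, avoiding the explicit grading, is purely representation-theoretic: $F_{t_0}(E)$ is a \emph{finite-dimensional} $\fq$-submodule of $\eM_\fq(\C_{s_0 d\chi})$ on which $\fl$ acts by a character-twist of the $L_0$-action; since $\fn$ is nilpotent and maps $F_{t_0}(E)$ to itself, $\fn$ must act nilpotently, but a nilpotent operator that is $\fl$-equivariant in the appropriate sense, combined with the fact that $\eM_\fq(\C_{s_0 d\chi})$ is free over $\Cal{U}(\bar\fn)$ and generated by its lowest piece, forces the action to vanish. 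I would present whichever of these is shorter in the final write-up, but the grading argument is the one I expect to execute most reliably.
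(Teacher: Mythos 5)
Your proof is essentially the paper's: conformal invariance makes $F_{t_0}(E)$ a $\fq$-stable subspace of $\eM_\fq(\C_{s_0d\chi})$, and then the $H_\gamma$-eigenvalue computation coming from Lemma \ref{Lem42} (all of $F_{t_0}(E)$ sits in the single eigenvalue $2s_0-3$, while $V^+$ and $X_\gamma$ raise the eigenvalue by $1$ and $2$) forces the $\fn$-action to vanish; your treatment of $\fz(\fn)$ via commutators of $V^+$-elements is a harmless variant of the paper's direct eigenvalue count. One small caveat: Proposition \ref{Prop22} and Theorem \ref{Thm24} run in the opposite direction (they take the $b$-map or the $\fq$-submodule as hypothesis), so the step ``conformal invariance $\Rightarrow$ $F_{t_0}(E)$ is a $\fq$-submodule'' should be attributed to the general theory in \cite{BKZ09}, as the paper does, rather than derived from those two results.
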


\begin{proof}
Since the $\Omega^{t_0}_3\big|_E$ system is 
conformally invariant over the line bundle $\Cal{L}_{-s_0}$,
the space $F_{t_0}(E)$ is a $\fq$-submodule of $\eM_\fq(\C_{s_0d\chi})$.
By applying Lemma \ref{Lem42} with $Z = H_\gamma$, 
we obtain $H_\gamma \omega^{t_0}_3(Y) 
= (2s_0-3)\omega^{t_0}_3(Y)$ for all $Y \in E$.
For $U \in V^+$ we have
$H_\gamma U\omega^{t_0}_3(Y) = (2s_0-2)U\omega^{t_0}_3(Y)$, and 
$H_\gamma X_\gamma \omega^{t_0}_3(Y) 
= (2s_0-1)X_\gamma \omega_2(Y)$ for all $Y \in E$.
Therefore if $U \in \fn$ then
$U\omega^{t_0}_3(Y) = 0$ for all $Y \in E$,
because otherwise $U\omega^{t_0}_3(Y)$ would have the wrong $H_\gamma$-eigenvalue to lie in $F_{t_0}(E)$.
\end{proof}

By using the transformation law 
$\omega^t_3(\Ad(l)Y) = \chi(l)\Ad(l)\omega^t_3(Y)$
for $l \in L_0$ and $Y \in V^-$,
one can check that for any $s \in \C$ the vector space isomorphism
\begin{equation}\label{Eqn45}
E \otimes \C_{(s-1)d\chi} \to F_t(E),
\end{equation}
given by $Y \otimes 1 \mapsto \omega^t_3(Y)$,
is $L_0$-equivariant with respect to the standard action of $L_0$ on
the tensor products $E \otimes \C_{(s-1)d\chi}$ and 
$F_t(E)\subset \Cal{U}(\bar \fn) \otimes \C_{sd\chi}$.
In particular, the $L_0$-module $F_{t_0}(E)$ is irreducible.
Here one may want to notice that the $L_0$-action on
$F_t(E)$ is given by $l \cdot (u \otimes 1) = \chi^s(l) (\Ad(l)u \otimes 1)$,
which is different from the one that is used to establish 
the $L_0$-equivariant isomorphism (\ref{Eqn23}).

Now suppose that the $\Omega^t_3\big|_E$ system is conformally invariant
for $t = t_0$ over $\Cal{L}_{-s_0}$. 
Then $F_{t_0}(E)$ is a $\fq$-submodule of 
$\Cal{U}(\bar \fn) \otimes \C_{s_0d\chi}$.
Since $F_{t_0}(E)$ is an irreducible $L_0$-module 
and $\fn$ acts on it trivially by Lemma \ref{Lem43},
 the inclusion map $F_{t_0}(E) \hookrightarrow \eM_\fq(\C_{s_0d\chi})$ induces
a non-zero $\Cal{U}(\fg)$-homomorphism
of generalized Verma modules
\begin{equation*}
\eM_\fq(F_{t_0}(E)) \to \eM_\fq(\C_{s_0d\chi}),
\end{equation*}
that is given by
$u \otimes \omega_3^{t_0}(Y) \mapsto u \cdot \omega_3^{t_0}(Y)$.
In particular, the two Verma modules
$\eM_\fq(F_{t_0}(E))$
and $\eM_\fq(\C_{s_0d\chi})$
have the same infinitesimal characters.
Since we choose character $\chi$ so that $d\chi = \gamma$,
this implies that if $\varpi$ is the highest weight for $E$
then 
\begin{equation}\label{Eqn47}
||\varpi +(s_0 - 1)\gamma + \rho||^2 = ||s_0 \gamma + \rho||^2,
\end{equation}
This will restrict the possibility of $s_0$ for which 
the $\Omega_3^t$ is conformally invariant.

\section{The $\Omega_3$ system on $\f{sl}(3,\C)$}\label{Section5}

In this short section we take a complex Lie group $G$ 
in  Section \ref{Section4} to be $SL(3,\C)$ and show
that the $\Omega_3$ system(s) exists over the line bundle $\Cal{L}_0$.
Since the generalized Verma module 
$\eM_\fq(\C_{sd\chi})$ is a (ordinary) Verma module in this case, 
we simply write $\eM(\C_{sd\chi})=\eM_\fq(\C_{sd\chi})$ 
throughout this section.

Let $\ga_1$ and $\ga_2$ be the two simple roots for $\f{sl}(3,\C)$.
Then we have $V^- = \C X_{-\ga_1} \oplus \C X_{-\ga_2}$;
each of $\C X_{-\ga_i}$ for $i = 1,2$ is an $L_0$-submodule of $V^-$.
Note that a direct computation shows that
$\Omega_3^t(X_{-\ga_i}) = \tOmega_3(X_{-\ga_i}) + t C_3(X_{-\ga_i})$
is not identically zero for $i=1,2$ and for any $t \in \C$.
Then by solving (\ref{Eqn47}) with $\varpi = -\ga_i$ for $i=1,2$,
one can see that if $\Omega^t_3(X_{-\ga_i})$ is conformally invariant 
over $\Cal{L}_{-s_0}$ then the special value $s_0$ of $s$ must be
$s_0 = 0$. Now we show that there exists a unique $t_i \in \C$ so that 
$\Omega^{t_i}_3(X_{-\ga_i})$ is conformally invariant over $\Cal{L}_0$.

\begin{Thm}\label{Thm51}
Let $\fg$ be the complex simple Lie algebra of type $A_2$, and $\fq$ be
the parabolic subalgebra of Heisenberg type. Then for each $i=1,2$
the operator $\Omega^{t}_3(X_{-\ga_i})$ is conformally invariant over $\Cal{L}_0$
if and only if $t = \frac{3}{4}$.
\end{Thm}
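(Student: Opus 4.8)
The plan is to reduce the conformal invariance of the single operator $\Omega_3^t(X_{-\ga_i})$ to a finite, explicit verification, using the machinery of Section~\ref{Section3} together with the transformation laws already established in Section~\ref{Section4}. By Lemma~\ref{Lem42} (and the remarks following it), for $Z\in\fl$ the element $\omega_3^t([Z,Y])$ is already controlled, so $F_t(E)$ with $E=\C X_{-\ga_i}$ is automatically $\fl$-stable and one-dimensional; hence, by Proposition~\ref{Prop22} and Theorem~\ref{Thm24}, the list consisting of the single operator $\Omega_3^t(X_{-\ga_i})$ is conformally invariant over $\Cal{L}_{-s}$ precisely when $F_t(E)$ is in addition stable under $\fn = V^+\oplus\fz(\fn)$, i.e. when $X_\gamma\,\omega_3^t(X_{-\ga_i})=0$ and $U\,\omega_3^t(X_{-\ga_i})=0$ for all $U\in V^+$. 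The first of these is implied by the second together with $[U,U']\in\fz(\fn)$ spanning $\fz(\fn)$ as $U,U'$ range over $V^+$ (using that $V^+$ is nonzero here since $\fg$ is not of type $A_1$), so the whole problem collapses to checking $U\,\omega_3^t(X_{-\ga_i})=0$ for $U$ running over a basis of $V^+$.

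First I would fix the root data for $A_2$ explicitly: the highest root is $\gamma=\ga_1+\ga_2$, the deleted Dynkin diagram is empty, $\fl=\fh$, $V^+=\fg_{\ga_1}\oplus\fg_{\ga_2}$, $V^-=\fg_{-\ga_1}\oplus\fg_{-\ga_2}$, $\fz(\fn)=\fg_\gamma$, and the only structure constant in play is $N_{\ga_1,\ga_2}$ (normalizable to $\pm1$ by (C5) and (C3)). Using $(\ref{Eqn41})$ one computes $\Omega_2(Z)$ for $Z=H_{\ga_1},H_{\ga_2}$ (the only $\fl=\fh$ directions), obtaining a multiple of $R(X_{-\ga_1})R(X_{-\ga_2})$; this makes $\tOmega_3(X_{-\ga_i})=\sum_{\ge\in\{\ga_1,\ga_2\}}R(X_{-\ge})\Omega_2([X_\ge,X_{-\ga_i}])$ a concrete second-degree element of $\Cal{U}(\bar\fn)$, and $C_3(X_{-\ga_i})=R(X_{-\ga_i})R(X_{-\gamma})$. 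Then $\omega_3^t(X_{-\ga_i})$ is the corresponding element of $\Cal{U}(\bar\fn)\otimes\C_{s d\chi}$ with $s=s_0=0$ (the value forced by $(\ref{Eqn47})$ with $\varpi=-\ga_i$, as already noted in the text).

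The core computation is then to apply $U=X_{\ga_1}$ and $U=X_{\ga_2}$ to $\omega_3^t(X_{-\ga_i})\in\Cal{U}(\bar\fn)\otimes\C_{0}$ inside $\eM(\C_0)$ and impose that the result vanish. Concretely, one commutes $X_{\ga_j}$ past the (at most two) factors from $\bar\fn$ using $[X_{\ga_j},X_{-\ga_k}]=\gd_{jk}H_{\ga_j}$ and $[X_{\ga_j},X_{-\gamma}]=\pm X_{-\ga_k}$ (for $k\ne j$), moves the resulting $\fh$-elements to the right where they act by scalars on $\C_0$ (hence kill the remaining lower-degree terms since $s=0$), and collects the coefficient of each surviving monomial in $\Cal{U}(\bar\fn)$ as a linear function of $t$. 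Setting these coefficients to zero yields linear equations in $t$; the claim is that for each $i$ the unique common solution is $t=\frac34$. I would also record that $\Omega_3^t(X_{-\ga_i})\not\equiv0$ for every $t$ (stated in the text), so that condition (S1) holds and the system is genuinely nontrivial. The main obstacle is purely bookkeeping: getting the normalization of $\Omega_2$ from $(\ref{Eqn41})$ exactly right (the factor $-\tfrac12$, the sign and value of $N_{\ga_1,\ga_2}$, and the $M_{\ga,\gb}$ scalars, which here are just entries of $\ad$ on $V^+$) so that the rational number $\tfrac34$ comes out correctly and identically for both $i=1$ and $i=2$; once the constants are pinned down, the commutator algebra in $\Cal{U}(\bar\fn)$ is short because $\bar\fn$ is only three-dimensional and the relevant elements have degree at most two.
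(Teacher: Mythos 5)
Your proposal follows essentially the same route as the paper: it invokes Theorem \ref{Thm24}, Proposition \ref{Prop22}, and the weight/annihilation argument of Lemma \ref{Lem43} (with $s_0=0$ forced by (\ref{Eqn47})) to reduce conformal invariance to the statement that $\fn$ annihilates $\omega_3^t(X_{-\ga_i})$ in $\eM(\C_0)$, and then settles this by the explicit rank-two computation in $\Cal{U}(\bar\fn)\otimes\C_0$. The paper organizes that last step by exhibiting $\omega_3^t(X_{-\ga_i})$ as $-\tfrac32 N_{\ga_i,\ga_k}X_{-\ga_i}^2X_{-\ga_k}\otimes 1+(t-\tfrac34)X_{-\ga_i}X_{-\gamma}\otimes 1$ and noting the first monomial is $\fn$-annihilated while the second is not, which is the same bookkeeping you describe, just collected before applying the raising operators.
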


\begin{proof}
Fix $\ga_i$ and denote by $\ga_k$ the other simple root 
so that $S = \{\ga_i, \ga_k\}$. 
Observe that $\omega^t_3(X_{-\ga_i})$
is the element in $\eM(\C_0)$ that corresponds to 
$\Omega^t_3(X_{-\ga_i})$ 
in $\mathbb{D}(\Cal{L}_0)^{\bar \fn}$ under the map (\ref{Eqn23}).
By Theorem \ref{Thm24} and Lemma \ref{Lem43}, it suffices to show that 
$\C \omega_3^{t}(X_{-\ga_i})$ is a $\fq$-submodule 
of $\eM(\C_0)$ with trivial $\fn$ action if and only if $t = \frac{3}{4}$.

A direct computation shows that the element in $\eM(\C_0)$ that corresponds
to $\tOmega_3(X_{-\ga_i})$ may be written as
\begin{equation*}
-\frac{3}{2}N_{\ga_i, \ga_k}X_{-\ga_i}^2X_{-\ga_k} \otimes 1
-\frac{3}{4}X_{-\ga_i}X_{-\gamma} \otimes 1.
\end{equation*}
As $C_3(X_{-\ga_i}) = R(X_{-\ga_i})R(X_{-\gamma})$,
the element in $\eM(\C_0)$ corresponding to $C_3(X_{-\ga_i})$ is 
$X_{-\ga_i}X_{-\gamma} \otimes 1$. 
Thus $\omega_3^t(X_{-\ga_i})$ may be given by
\begin{equation*} \label{Eqn4.2}
\omega_3^t(X_{-\ga_i}) 
= -\frac{3}{2}N_{\ga_i, \ga_k}X_{-\ga_i}^2X_{-\ga_k} \otimes 1
+\left(t-\frac{3}{4}\right)X_{-\ga_i}X_{-\gamma} \otimes 1.
\end{equation*}
One can easily check that $\fn$ acts trivially on
$\C X_{-\ga_1}^2X_{-\ga_2} \otimes 1$ and $\C X_{-\ga_2}^2X_{-\ga_1} \otimes 1$
and thus both of them are
one-dimensional $\fq$-submodules of $\eM(\C_0)$,
while it acts nontrivially on $X_{-\ga_1}X_{-\gamma}\otimes 1$ 
and $X_{-\ga_2}X_{-\gamma} \otimes 1$ in $\eM(\C_0)$. 
Therefore $\C \omega_3^t(X_{-\ga_i})$ is a $\fq$-submodule with 
trivial $\fn$ action if and only if $t = \frac{3}{4}$.
\end{proof} 

\section{The $\Omega_3$ system on $\f{so}(8,\C)$}\label{Section6}

In this section we take a complex Lie group $G$ in Section \ref{Section4}
to be $SO(8,\C)$ and show that the $\tOmega_3$ system
is conformally invariant over the line bundle $\Cal{L}_{1}$.

Note that since in this case the parabolic $\fq$ is maximal, 
the $\fl$-module $V^-$ is irreducible with highest weight $-\ga_\gamma$,
where $\ga_\gamma$ is the simple root that is not orthogonal to $\gamma$.
Then by solving (\ref{Eqn47}) with $\varpi = -\ga_\gamma$,
one can see that if the $\Omega_3$ system exists 
then the special value $s_0$ of $s$
must be $s_0 = -1$.
Thus in the rest of this paper the line bundle $\Cal{L}_{-s}$ is assumed to be $\Cal{L}_{1}$,
and for simplicity we write $\Pi$ for the 
Lie algebra action $\Pi_s$ defined in (\ref{Eqn31}) for $s = -1$.
As stated in Section \ref{Section3},
for $D\in \mathbb{D}(\Cal{L}_{-s})$, we denote by $D_{\nbar}$ the linear functional
$f \mapsto (D\acts f)(\nbar)$ for $f \in C^\infty(\Nbar_0, \C_{\chi^{-s}})$. 

\begin{Prop}\label{Prop34}
For all $X \in \fg$, $Y \in V^{-}$, and $\bar{n} \in \bar{N}_0$, we have
\beu
[\Pi(X), R(Y)]_{\nbar} = 
R\big( [\emph{Ad}(\nbar^{-1})X, Y]_{V^-} \big)_{\bar{n}} 
- d\chi \big( [\emph{Ad}(\nbar^{-1})X, Y]_{\fl} \big).
\end{equation*}
\end{Prop}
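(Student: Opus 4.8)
The plan is to compute the commutator $[\Pi(X),R(Y)]_{\nbar}$ directly from the explicit formula \eqref{Eqn21} for the action $\Pi_s$ together with the defining formula for the right action $R$. Recall that $[\Pi_s(X),R(Y)]$ is a differential operator on $\Cal{L}_{-s}$; since $R(Y)$ is itself a first-order operator and $\Pi_s(X)$ is first-order, the commutator is again first-order, and by Theorem \ref{Thm24} (applied with the one-dimensional submodule $\C Y \subset V^-$ inside $\eM_\fq(\C_{sd\chi})$, if $Y$ happens to span such a submodule) one already knows the shape of the answer; but here $Y$ is an arbitrary element of $V^-$, so I would argue more directly. First I would write out, for $f \in C^\infty(\bar N_0,\C_{\chi^{-s}})$, the two quantities $\big(\Pi(X)\acts(R(Y)\acts f)\big)(\nbar)$ and $\big(R(Y)\acts(\Pi(X)\acts f)\big)(\nbar)$ using \eqref{Eqn21}, differentiate the curve $t\mapsto \nbar\exp(tY)$ appropriately, and subtract.

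The key algebraic input is the behavior of the map $\nbar \mapsto \Ad(\nbar^{-1})X$ under right translation by $\exp(tY)$: one has $\Ad\big((\nbar\exp(tY))^{-1}\big)X = \Ad(\exp(-tY))\Ad(\nbar^{-1})X = \Ad(\nbar^{-1})X - t[Y,\Ad(\nbar^{-1})X] + O(t^2)$. Differentiating the two terms on the right side of \eqref{Eqn21} at $t=0$ then produces exactly the bracket $[\,\Ad(\nbar^{-1})X,\,Y\,]$, decomposed according to $\fg = \bar\fn \oplus \fl \oplus \fn$ via the projections $(\cdot)_\fq$ and $(\cdot)_{\bar\fn}$. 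The term coming from $-sd\chi\big((\Ad(\nbar^{-1})X)_\fq\big)$ contributes the piece involving $d\chi\big([\Ad(\nbar^{-1})X,Y]_\fl\big)$ (note $d\chi$ kills the $\fn$-component since $\chi$ is trivial on $N_0$, and with $s=-1$ the sign works out), while the term coming from $-R\big((\Ad(\nbar^{-1})X)_{\bar\fn}\big)$ contributes the piece $R\big([\Ad(\nbar^{-1})X,Y]_{\bar\fn}\big)$ evaluated at $\nbar$; since $Y \in V^- \subset \bar\fn$ and $[V^-,\bar\fn]\subset\bar\fn$ with the $\bar\fn$-component of $[\Ad(\nbar^{-1})X,Y]$ landing in $V^-$ up to $\fz(\bar\fn)$ terms, and since by \eqref{Intro1} the $H_\gamma$-eigenvalue bookkeeping forces the relevant contribution into $V^-$, this reduces to $R\big([\Ad(\nbar^{-1})X,Y]_{V^-}\big)_{\nbar}$. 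Cross terms where $\Pi(X)$ differentiates the $f$ appearing inside $R(Y)\acts f$ cancel against the symmetric cross terms from the other ordering, which is the usual mechanism by which only the "derivative of the coefficient" survives in such a commutator.

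I expect the main obstacle to be careful bookkeeping of the projections and of the $\fz(\bar\fn)$-component: a priori $[\Ad(\nbar^{-1})X,Y]_{\bar\fn}$ could have a nonzero $\fz(\bar\fn) = \fg_{-\gamma}$ part, and one must check that it does not contribute, or that the statement's $[\cdot]_{V^-}$ notation is understood to be the $V^-$-component of the $\bar\fn$-part. This follows from an $\ad(H_\gamma)$-weight count: $\Ad(\nbar^{-1})X$ ranges over all of $\fg$, $Y$ has $H_\gamma$-weight $-1$, and one checks using \eqref{Eqn21} that the $\fz(\bar\fn)$-component of $\Ad(\nbar^{-1})X$ enters $\Pi(X)$ only through $R(X_{-\gamma})$-type terms that commute with $R(Y)$ (as $V^-$ is abelian modulo $\fz(\bar\fn)$ and $[\fz(\bar\fn),V^-]=0$), so they drop out of the commutator. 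Once this is settled the rest is a routine application of the chain rule and the product rule for differential operators, organized exactly as in the derivation of \eqref{Eqn21} itself; in fact this proposition is the natural "operator-valued refinement" of \eqref{Eqn21} and can be viewed as a special case of Theorem \ref{Thm24}, which I would cite to cross-check the final formula.
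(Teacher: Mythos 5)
Your overall strategy (differentiate (\ref{Eqn21}) directly along $t\mapsto\nbar\exp(tY)$ and subtract the two orderings) is a genuinely different route from the paper's proof: the paper applies Theorem \ref{Thm24} to the finite-dimensional $\fq$-submodule of $\eM_\fq(\C_{-d\chi})$ spanned by the vectors $X_{-\ga}\otimes 1$, $\ga\in\gD(V^+)$, together with $1\otimes 1$, after computing the $\fl$- and $\fn$-actions on these basis elements, and then simply reads off the commutator formula. Incidentally, your reason for setting Theorem \ref{Thm24} aside --- that $\C Y$ would itself have to be a $\fq$-submodule --- misreads how it is used: one takes the span above, which is $\fq$-stable precisely because $\fn$ sends each $X_{-\ga}\otimes 1$ into $\C(1\otimes 1)$.

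Your direct computation, however, has a genuine gap at the key cancellation step. You assert that the cross terms cancel exactly (``the usual mechanism by which only the derivative of the coefficient survives''). That is false here: writing $W=(\Ad(\nbar^{-1})X)_{\bar\fn}$, the two cross terms differ in the commutator by $-R([W,Y])_{\nbar}$, since $R(W)$ and $R(Y)$ need not commute --- $[R(W),R(Y)]=R([W,Y])$, and $[W,Y]=[(\Ad(\nbar^{-1})X)_{V^-},Y]$ is in general a nonzero multiple of $X_{-\gamma}$ because $\bar N_0$ is two-step nilpotent, not abelian. If the cross terms really cancelled, the coefficient-derivative term would leave you with $R\big([\Ad(\nbar^{-1})X,Y]_{\bar\fn}\big)_{\nbar}$, which exceeds the claimed formula by exactly this $\fz(\bar\fn)$-piece. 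Your proposed fix for the $\fz(\bar\fn)$-bookkeeping addresses the wrong term: it is true that the $\fz(\bar\fn)$-component of $\Ad(\nbar^{-1})X$ is harmless because $[\fz(\bar\fn),V^-]=0$, but the offending $\fg_{-\gamma}$-contribution arises from the $V^-$-component of $\Ad(\nbar^{-1})X$ bracketed against $Y$, and no weight count removes it. The correct mechanism is that the leftover $-R([W,Y])_{\nbar}$ from the non-cancelling cross terms exactly cancels the $\fz(\bar\fn)$-part of $R\big([\Ad(\nbar^{-1})X,Y]_{\bar\fn}\big)_{\nbar}$, leaving the $V^-$-projection appearing in the statement; the $d\chi$-term (with $s=-1$) you do handle correctly. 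With this correction your computation goes through and gives an elementary alternative to the paper's argument.
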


\begin{proof}
Let $F$ be the subspace of $\eM_\fq(\C_{-d\chi}) $ spanned by $X_{-\ga} \otimes 1$ 
and $1\otimes1$ with  $\ga \in \gD(V^+)$. A direct computation shows that 
$F$ is a $\fq$-submodule of $\eM_\fq(\C_{-d\chi}) $ and that for $Z \in \fl$ and $U \in \fn$ we have
\beu
Z(X_{-\ga} \otimes 1) = [Z, X_{-\ga}]\otimes 1 - d\chi(Z)X_{-\ga}\otimes 1
\end{equation*}
and
\beu
U(X_{-\ga} \otimes 1) = -d\chi([U, X_{-\ga}]_\fl)1\otimes 1.
\end{equation*}
Then it follows from Theorem \ref{Thm24} that if
$X \in \fg$ and $\big( \Ad(\nbar^{-1})X\big)_\fq = Z + U$ 
with $Z\in \fl$ and $U \in \fn$ then for $Y \in V^-$,
\beu
[\Pi(X), R(Y)]_{\nbar} = R\big([Z,Y]\big)_{\nbar} - d\chi\big([U,Y]\big).
\end{equation*}
Since $[Z,Y] = [\Ad(\nbar^{-1})X, Y]_{V^-}$ 
and $[U, X_{-\ga}]_\fl = [\Ad(\nbar^{-1})X, Y]_{\fl}$, this completes the proof.
\end{proof}

\vskip 0.1in

Let $\omega_2(W)$ denote the element in $\Cal{U}(\bar \fn)\otimes \C_{-d\chi}$
that corresponds to $\Omega_2(W)$ under $R$.  
Observe that we have
$\Omega_2(\Ad(l)W) = \chi(l) l \cdot \Omega_2(W)$
for all $l \in L_0$,
that is the same $\Ad(l)$ transformation law as $\Omega^t_3$ 
(see Lemma \ref{Lem31}).
It then follows from Lemma \ref{Lem42} with $s = -1$
that for $W, Z \in \fl$, we have
\begin{equation}\label{Eqn62}
\omega_2\big( [Z, W] \big) = Z \omega_2(W) + 2d\chi(Z)\omega_2(W).
\end{equation}

\begin{Prop}\label{Prop35}
For all $X \in \fg$, $W \in \fl$, and $\nbar \in \bar{N}_0$, we have 
\beu
[\Pi(X), \Omega_2(W) ] _{\nbar}
= \Omega_2\big([\emph{Ad}(\nbar^{-1})X, W]_{\fl}\big)_{\nbar} 
- d\chi \big((\emph{Ad}(\nbar^{-1})X)_\fl \big)\Omega_2(W)_{\nbar}.
\end{equation*}
\end{Prop}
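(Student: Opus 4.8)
The plan is to derive Proposition \ref{Prop35} as a consequence of Theorem \ref{Thm24} applied to a suitable finite-dimensional $\fq$-submodule of the generalized Verma module, in exact parallel with the proof of Proposition \ref{Prop34}. The natural candidate for the submodule is $F = F_{-1}'(\fl) := \mathrm{span}_\C\{\omega_2(W) \mid W \in \fl\} \subset \Cal{U}(\bar\fn)\otimes\C_{-d\chi} \cong \eM_\fq(\C_{-d\chi})$, i.e. the image under $R$ of the operators $\Omega_2(W)$. The key point is that this space is a $\fq$-submodule: it is visibly $\fl$-stable by the transformation law $\Omega_2(\Ad(l)W) = \chi(l)\, l\cdot\Omega_2(W)$ (equivalently by (\ref{Eqn62})), and one must check that $\fn$ acts on it in a controlled way. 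First I would establish that $F$ is $\fq$-invariant and write down the action formulas for $Z \in \fl$ and $U \in \fn$ on the generators $\omega_2(W)$.

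The main computational input is the $\fn$-action on $F$. Since $\Omega_2(W)$ is a second-order operator, $\omega_2(W) \in \Cal{U}_2(\bar\fn)\otimes\C_{-d\chi}$, so applying $U \in \fn = V^+ \oplus \fz(\fn)$ drops the degree by at most two; the candidate target for $U\cdot\omega_2(W)$ is a scalar multiple of $1\otimes 1$, paralleling the formula $U(X_{-\ga}\otimes 1) = -d\chi([U,X_{-\ga}]_\fl)\,1\otimes 1$ in the proof of Proposition \ref{Prop34}. Concretely, I expect $U\,\omega_2(W) = c\cdot d\chi\big(\text{something built from }U\text{ and }W\big)\,1\otimes 1$, but in fact the structure operator only needs the $\fl$-part of $\Ad(\nbar^{-1})X$: comparing with Proposition \ref{Prop34} and the desired conclusion, the cleanest route is to observe that the formula in the statement involves \emph{only} $[\Ad(\nbar^{-1})X,W]_\fl$ and $(\Ad(\nbar^{-1})X)_\fl$ — there is no $\fz(\bar\fn)$ or $1\otimes 1$ term surviving — which suggests that when one feeds the $\fq$-module $F$ into Theorem \ref{Thm24}, the constants $a_{ri}(Y)$ for $Y\in\fq$ are governed precisely by (\ref{Eqn62}): writing $(\Ad(\nbar^{-1})X)_\fq = Z + U$ with $Z\in\fl$, $U\in\fn$, Theorem \ref{Thm24} gives
\[
[\Pi(X),\Omega_2(W)]_{\nbar} = \big(\text{contribution of }Z\text{ acting in }F\big) - sd\chi\big((\Ad(\nbar^{-1})X)_\fq\big)\Omega_2(W)_{\nbar},
\]
and with $s=-1$, $d\chi((\Ad(\nbar^{-1})X)_\fq) = d\chi((\Ad(\nbar^{-1})X)_\fl)$ since $d\chi = \gamma$ vanishes on $\fn$. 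The $Z$-contribution, by (\ref{Eqn62}), is $\omega_2([Z,W]) + 2d\chi(Z)\omega_2(W)$ translated back through $R$; combining the $2d\chi(Z)$ term with the $-sd\chi = +d\chi$ term from the leading-term formula should collapse to give exactly $\Omega_2([Z,W])_{\nbar} - d\chi((\Ad(\nbar^{-1})X)_\fl)\Omega_2(W)_{\nbar}$, using $[Z,W] = [\Ad(\nbar^{-1})X,W]_\fl$.

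I would organize the proof as: (i) verify $F$ is a finite-dimensional $\fq$-submodule of $\eM_\fq(\C_{-d\chi})$ and compute the $\fl$- and $\fn$-actions on a basis coming from $\omega_2(W_j)$; (ii) apply Theorem \ref{Thm24} with this $F$, $s=-1$, and the differential operators $\Omega_2(W_j)$; (iii) simplify the resulting expression using (\ref{Eqn62}), the identity $[Z,W] = [\Ad(\nbar^{-1})X,W]_\fl$, and the fact that $d\chi$ kills $\fn$ so that $d\chi((\Ad(\nbar^{-1})X)_\fq) = d\chi((\Ad(\nbar^{-1})X)_\fl)$. The main obstacle I anticipate is step (i): confirming that $\fn$ really does stabilize $F$ — equivalently, that $\Omega_2$ satisfies a bracket identity of the form $[\Pi(U),\Omega_2(W)]$ lands back among the $\Omega_2$'s (plus a scalar) for $U\in\fn$ — and pinning down the precise scalar by which $\fn$ acts, which requires either a direct Weyl-algebra computation with the explicit formula (\ref{Eqn41}) for $\Omega_2$ or an appeal to the relevant covariance property of $\Omega_2$ from \cite{BKZ08}. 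Once $F$ is known to be a $\fq$-module, the rest is a bookkeeping exercise matching the shape of the formula in Proposition \ref{Prop34}.
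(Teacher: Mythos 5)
Your overall skeleton is the paper's own route: take $F=\mathrm{span}_\C\{\omega_2(W)\mid W\in\fl\}\subset\eM_\fq(\C_{-d\chi})$, feed it into Theorem \ref{Thm24} with $s=-1$, use (\ref{Eqn62}) for the $\fl$-action, and note that $d\chi$ kills $\fn$; your final bookkeeping (the $-2d\chi(Z)$ from (\ref{Eqn62}) combining with $-sd\chi=+d\chi$ to leave $-d\chi(Z)$) is exactly right. However, the step you yourself flag as the ``main obstacle'' --- that $F$ is a $\fq$-submodule and that $\fn$ acts on it \emph{trivially} --- is precisely the substance of the proposition, and you leave it unresolved. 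This is a genuine gap: if $U\,\omega_2(W)$ were a nonzero multiple of $1\otimes 1$, as you tentatively suggest by analogy with Proposition \ref{Prop34}, then $F$ alone would not be $\fq$-stable, Theorem \ref{Thm24} would have to be applied to the enlarged space containing $1\otimes 1$, and the resulting formula would acquire an extra zeroth-order term; so the clean statement genuinely requires the $\fn$-action to vanish, and ``the formula has no such term'' is a heuristic, not an argument.

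The paper closes this gap with two inputs you did not supply. First, the $\fq$-submodule property is not checked by a Weyl-algebra computation with (\ref{Eqn41}); it is imported from \cite{BKZ08}: Theorem 5.2 there, together with the special values tabulated in Section 8.10 (with the sign flip coming from the choice of $\fq$ rather than $\bar\fq$), says that each $\Omega_2$ system attached to a singleton component of the deleted Dynkin diagram is conformally invariant over $\Cal{L}_1$, i.e.\ precisely over the bundle at hand, and conformal invariance is what makes $F$ a $\fq$-submodule of $\eM_\fq(\C_{-d\chi})$. Second, once $F$ is known to be $\fq$-stable, the triviality of the $\fn$-action follows from the $H_\gamma$-eigenvalue argument of Lemma \ref{Lem43}: by (\ref{Eqn62}) with $Z=H_\gamma$, every $\omega_2(W)$ has $H_\gamma$-eigenvalue $-4$, whereas $U\,\omega_2(W)$ for $U\in V^+$ (resp.\ $X_\gamma\,\omega_2(W)$) would have eigenvalue $-3$ (resp.\ $-2$), so these elements cannot lie in $F$ and must be zero. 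This weight argument is what rules out your hypothesized $1\otimes 1$ contribution without any explicit computation; without it (or an equivalent direct verification), your step (i) is missing and the proof is incomplete.
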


\begin{proof}
It follows from Theorem 5.2 of \cite{BKZ08} and 
the data tabulated in Section 8.10 of \cite{BKZ08}
that each $\Omega_2$ system associated to a singleton component of 
$\eD_\gamma(\fg,\fh)$
is conformally invariant on the line bundle $\Cal{L}_{1}$.
The reader may want to note here that
the special values of our $\Omega_2$ system are 
of the form $-s_0$ with $s_0$ the special values of the $\Omega_2$ system
given in \cite{BKZ08}, as the parabolic $\fq$ is chosen in this paper,
while the opposite parabolic $\bar \fq$ is chosen in \cite{BKZ08}.
Therefore $F \equiv \text{span}_\C\{  \omega_2(W) \; | \; W \in \fl\}$ is 
a $\fq$-submodule of $\eM_\fq(\C_{-d\chi})$. 
The same argument for the proof for Lemma \ref{Lem43} shows that 
$\fn$ acts on $F$ trivially. By (\ref{Eqn62}), we have
\beu
Z\omega_2(W) = \omega_2([Z,W]) - 2d\chi(Z)\omega_2(W)
\end{equation*}
for $Z,W \in \fl$. The proposed formula now follows from Theorem \ref{Thm24}.
\end{proof}

\begin{Lem}\label{Lem34}
For $X \in V^+$ and $Y \in V^-$, we have 
\beu
\sum_{\ge \in \gD(V^+)}\Omega_2\big([[X, X_{-\ge}],[X_\ge, Y]]\big)
=2\Omega_2([X,Y]).
\end{equation*}
\end{Lem}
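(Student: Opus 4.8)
The plan is to recognize the left-hand side as a quadratic-in-$\Omega_2$ quantity that can be rewritten using the $\fl$-equivariance of $\Omega_2$ and the structure of the bracket on $\fg$, and then to reduce it to a Casimir-type identity inside $\fl$. First I would fix $X \in V^+$ and $Y \in V^-$ and examine the inner bracket $[[X,X_{-\ge}],[X_\ge,Y]]$: since $X, X_\ge \in V^+$ and $X_{-\ge}, Y \in V^-$, each of $[X,X_{-\ge}]$ and $[X_\ge, Y]$ lies in $\fl \oplus \fz(\bar\fn) \oplus \fz(\fn)$ (degree $0$ or $\pm 2$ under $\ad(H_\gamma)$), and since $\Omega_2$ is only defined on $\fl$ I would argue, using the $\ad(H_\gamma)$-grading, that only the $\fl$-components survive — or, more carefully, track which pieces of the double bracket land in $\fl$. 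The key algebraic input is that $\sum_{\ge \in \gD(V^+)} [X_{-\ge} \wedge X_\ge]$ (suitably interpreted) represents, up to normalization, the action coming from the element $\sum_\ge X_{-\ge}X_\ge$ of $\Cal{U}(\fl)$, so the sum $\sum_\ge \ad([X,X_{-\ge}])\ad([X_\ge, \cdot])$ acting appropriately should be proportional to a multiple of $\ad$ composed with itself.

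Concretely, I would use the identity $[X, X_{-\ge}] = \sum_{\gd \in \gD(V^+)} c_{\ge\gd} H$-type terms together with the bilinear form: since $\{X_\ge\}$ and $\{X_{-\ge}\}$ are $B_\fg$-dual bases of $V^+$ and $V^-$, the tensor $\sum_\ge X_{-\ge} \otimes X_\ge$ is the identity element of $V^- \otimes V^+ \cong \Hom(V^+, V^+)$, hence independent of the choice of dual bases (cf. Lemma \ref{Lem30}). This lets me replace the double sum by a single application of brackets. The cleanest route is probably: apply the Jacobi identity to move things around so that the sum becomes $\sum_\ge [X_{-\ge}, [[X_\ge, X], Y]]$ plus correction terms, then recognize $\sum_\ge \ad(X_{-\ge})\ad(X_\ge)$ restricted to the relevant space as a scalar operator (a Casimir for $\fl$ acting on $V^+$ or on $\fg_\gamma$), evaluate that scalar, and collect. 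The constant $2$ on the right should drop out of normalization (C3), where $(\ga,\ga) = 2$ for all roots in the simply-laced case, combined with the eigenvalue of $\ad(H_\gamma)$ on $V^+$ being $1$.

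Alternatively — and this may be the slicker path — I would deduce the identity from the conformal invariance of the $\Omega_2$ system rather than by brute-force bracket computation. By Proposition \ref{Prop35}, $[\Pi(X), \Omega_2(W)]_{\nbar}$ is controlled by $\Omega_2([\Ad(\nbar^{-1})X, W]_\fl)$; feeding in a second copy of an element and using that $\fn$ acts trivially on the span of the $\omega_2(W)$ (established in the proof of Proposition \ref{Prop35}), one gets relations among expressions $\Omega_2([U, [U', W]]_\fl)$ for $U, U' \in V^+$. Dualizing/pairing $U'$ against a basis of $V^-$ and using that $\sum_\ge X_{-\ge} \otimes X_\ge$ is basis-independent would then yield exactly the claimed identity, with the factor $2 = d\chi(H_\gamma)$ appearing as the $\ad(H_\gamma)$-eigenvalue bookkeeping that already shows up in (\ref{Eqn62}).

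\textbf{Main obstacle.} The hard part will be controlling which components of the iterated bracket $[[X,X_{-\ge}],[X_\ge,Y]]$ actually feed into $\Omega_2$: a priori $[X, X_{-\ge}]$ has a $\fz(\fn)$-component (when $\ge$ pairs with $X$ to give a multiple of $\gamma$) and a $\fz(\bar\fn)$-component, not just an $\fl$-part, and similarly for $[X_\ge, Y]$, so one must check that the non-$\fl$ pieces either cancel in the sum or contribute zero because $\Omega_2$ annihilates them. Getting the normalization constant exactly right — pinning down that the Casimir-type scalar is $2$ and not some other multiple — will require careful use of the Chevalley normalization (C1)–(C5) and the fact that $\gamma(H_\gamma) = 2$; I expect this constant-chasing, rather than the structural part of the argument, to be where the real work lies.
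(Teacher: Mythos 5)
Your first route rests on the premise that the $\fl$-equivariant bilinear map $(X,Y)\mapsto \sum_{\ge\in\gD(V^+)}[[X,X_{-\ge}],[X_\ge,Y]]$ is a single Casimir-type scalar times $[X,Y]$, and that premise is not automatic — it is essentially the whole content of the lemma. Note first that your stated ``main obstacle'' is vacuous: by the $\ad(H_\gamma)$-grading, $[X,X_{-\ge}]$ and $[X_\ge,Y]$ have eigenvalue $1+(-1)=0$, so both already lie in $\fl$; there are no $\fz(\fn)$- or $\fz(\bar\fn)$-components to worry about. The genuine obstacle is that $\fl$ is not simple: for $D_4$ one has $[\fl,\fl]\cong\f{sl}(2)\oplus\f{sl}(2)\oplus\f{sl}(2)$ plus the center $\C H_\gamma$, so an $\fl$-equivariant map $V^+\otimes V^-\to\fl$ may act by a \emph{different} scalar on the projection of $[X,Y]$ to each simple ideal, and may differ from $[X,Y]$ by a central term. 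The paper's proof handles exactly this by citing Proposition 2.2 of \cite{BKZ08}, which expresses the left-hand side as $\tfrac12\sum_{\eC}p(D_4,\eC)\,\Omega_2\big(\text{pr}_{\eC}([X,Y])\big)$ over the components $\eC$ of the deleted Dynkin diagram, and then uses two further facts: the tabulated values $p(D_4,\eC)=4$ are equal for all three components (this is precisely where type $D_4$ enters), and $\Omega_2(H_\gamma)=0$, which absorbs the discrepancy between $\sum_{\eC}\text{pr}_{\eC}([X,Y])$ and $[X,Y]$. Your outline supplies none of these three ingredients: no component-by-component constant, no argument that the constants coincide, and no use of $\Omega_2(H_\gamma)=0$; the factor $2$ is attributed to normalization heuristics ((C3), $d\chi(H_\gamma)=2$) rather than computed, and those heuristics alone cannot produce it.

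Your alternative route through Proposition \ref{Prop35} is not circular in principle (the lemma feeds into the invariance of $\tOmega_3$, not of $\Omega_2$), but as written it is only a suggestion: iterating the commutator formula and ``dualizing against a basis of $V^-$'' gives relations whose extraction of this specific quadratic identity, with the specific constant $2$, is exactly the computation that is missing. As it stands the proposal identifies the right structural framework ($\fl$-equivariance of $\sum_\ge X_{-\ge}\otimes X_\ge$, cf. Lemma \ref{Lem30}) but leaves the quantitative core of the lemma unproved.
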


\begin{proof}
Since we have $||\ge||^2 = 2$ for all $\ge \in \gD(V^+)$,
it follows from Proposition 2.2 of \cite{BKZ08} that
\beu
\sum_{\ge \in \gD(V^+)}\Omega_2\big( [[X, X_{-\ge}],[X_\ge, Y]] \big)
= \frac{1}{2} \sum_{\eC}p(D_4,\eC)\Omega_2 \big( \text{pr}_{\eC}([X,Y])\big),
\end{equation*}
where $\eC$ are the connected components of $\eD_\gamma(\fg, \fh)$
as in \cite{BKZ08} and $\text{pr}_{\eC}([X,Y])$ is the projection
of $[X,Y]$ onto $\fl(\eC)$, the ideal of $[\fl, \fl]$ corresponding to $\eC$.
(See Section 2 of \cite{BKZ08} for further discussion.)
One can find in Section 8.4 of \cite{BKZ08} that 
$p(D_4, \eC) = 4$ for all the components $\eC$. Then the fact that
$\Omega_2(H_\gamma) = 0$ shows that we obtain
\beu
\sum_{\ge \in \gD(V^+)}\Omega_2\big( [[X, X_{-\ge}],[X_\ge, Y]] \big)
= 2\Omega_2 \big( [X,Y]\big),
\end{equation*}
which is the proposed formula.
\end{proof}

Now with the above lemmas and propositions
we are ready to show the following key theorem.

\begin{Thm}\label{Thm36}
We have $[\Pi(X), \tilde{\Omega}_3(Y)]_e = 0$ for all $X \in V^+$ and all $Y \in V^-$.
\end{Thm}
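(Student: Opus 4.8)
The plan is to compute the bracket $[\Pi(X), \tilde\Omega_3(Y)]_e$ by expanding $\tilde\Omega_3(Y) = \sum_{\ge\in\gD(V^+)} R(X_{-\ge})\Omega_2([X_\ge,Y])$ and applying the Leibniz rule, then using the two structural propositions (Proposition \ref{Prop34} and Proposition \ref{Prop35}) to rewrite the two resulting pieces. Concretely, for a product $R(X_{-\ge})\Omega_2([X_\ge,Y])$ one has
\beu
[\Pi(X), R(X_{-\ge})\Omega_2([X_\ge,Y])]
= [\Pi(X), R(X_{-\ge})]\,\Omega_2([X_\ge,Y]) + R(X_{-\ge})\,[\Pi(X),\Omega_2([X_\ge,Y])].
\end{equation*}
Evaluating at $\nbar = e$ kills the $\Ad(\nbar^{-1})$ factors, so $(\Ad(e^{-1})X)_\fq = 0$ since $X \in V^+ \subset \fn$ while the relevant projections in Propositions \ref{Prop34} and \ref{Prop35} are onto $\fl$ and $V^-$. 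Wait — more carefully: $\Ad(e^{-1})X = X \in V^+$, so $[\Ad(e^{-1})X, X_{-\ge}] = [X,X_{-\ge}] \in \fl \oplus \fz(\fn)$ generally, with $\fl$-component $[X,X_{-\ge}]_\fl$, and for the $\Omega_2$ term $[X, [X_\ge,Y]]_\fl$ appears. So neither bracket vanishes outright; the point of evaluating at $e$ is that only the genuinely bilinear-in-$\fg$ terms survive and the $d\chi$-multiplication terms in Propositions \ref{Prop34}, \ref{Prop35} contribute via $d\chi$ of an $\fl$-element.

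The next step is to collect the surviving terms. From $[\Pi(X),R(X_{-\ge})]_e$ we get $R([X,X_{-\ge}]_{V^-})$ minus $d\chi([X,X_{-\ge}]_\fl)$ times identity; but $[X,X_{-\ge}] \in \fg(0) = \fl$ when $X \in \fg_{\ge}$... actually $[V^+, V^-] \subset \fl$, so $[X,X_{-\ge}]_{V^-} = 0$ and only the scalar $-d\chi([X,X_{-\ge}]_\fl)\,\Omega_2([X_\ge,Y])$ term survives from the first piece. From $R(X_{-\ge})[\Pi(X),\Omega_2([X_\ge,Y])]_e$ we get, by Proposition \ref{Prop35}, $R(X_{-\ge})\Omega_2([X,[X_\ge,Y]]_\fl)$ minus $R(X_{-\ge})\,d\chi(([X]_\fl)\cdots$ — here $(\Ad(e^{-1})X)_\fl = 0$ since $X \in V^+$, so the $d\chi$ correction term vanishes entirely. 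Thus
\beu
[\Pi(X),\tilde\Omega_3(Y)]_e
= \sum_{\ge\in\gD(V^+)} R(X_{-\ge})\,\Omega_2\big([X,[X_\ge,Y]]_\fl\big)
- \sum_{\ge\in\gD(V^+)} d\chi\big([X,X_{-\ge}]_\fl\big)\,\Omega_2([X_\ge,Y]).
\end{equation*}

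The plan is then to massage each sum into a common form using the Jacobi identity, the relation $d\chi = \gamma$, and Lemma \ref{Lem34}. For the first sum, write $[X,[X_\ge,Y]] = [[X,X_\ge],Y] + [X_\ge,[X,Y]]$; the term $[X,X_\ge] \in \fz(\fn)$ (a multiple of $X_\gamma$) bracketed with $Y \in V^-$ lands in $V^+$, not $\fl$, so only $[X_\ge,[X,Y]]_\fl$ survives, and one should recognize $\sum_\ge R(X_{-\ge})\Omega_2([X_\ge,[X,Y]]_\fl)$ as essentially $\tilde\Omega_3$ applied to... no, $[X,Y] \in \fl$, so this is not of the right shape; instead I expect to use the $\fl$-module structure and equation (\ref{Eqn62}) to reorganize. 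The real leverage is Lemma \ref{Lem34}: the combination $\sum_\ge \Omega_2([[X,X_{-\ge}],[X_\ge,Y]]) = 2\Omega_2([X,Y])$ is exactly the kind of identity needed to make the two sums cancel after one moves the $R(X_{-\ge})$ past $\Omega_2$ using Proposition \ref{Prop35} in reverse (i.e., recognizing $R(X_{-\ge})\Omega_2(W) - \Omega_2(W)R(X_{-\ge}) = [\,\cdot\,]$ type relations) together with the $H_\gamma$-eigenvalue bookkeeping from Lemma \ref{Lem42}. I expect the main obstacle to be precisely this reorganization: tracking which brackets land in $\fl$ versus $\fz(\fn)$ versus $V^\pm$, applying Jacobi repeatedly, and seeing that the coefficient $2$ from Lemma \ref{Lem34} matches the coefficient coming from the $d\chi$-term sum $\sum_\ge d\chi([X,X_{-\ge}]_\fl)$, which by (C3)/(C4) and the structure of $\gD(V^+)$ should itself evaluate to a clean multiple (related to $\dim V^+$ or to $(\gamma,\gamma)$-normalized quantities) — getting these two numerical contributions to agree is the crux.
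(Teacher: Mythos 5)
There is a genuine gap, and it sits exactly at the step you treated as routine: the evaluation at $e$ of $\sum_{\ge\in\gD(V^+)}R(X_{-\ge})\,[\Pi(X),\Omega_2([X_\ge,Y])]$. Proposition \ref{Prop35} expresses $[\Pi(X),\Omega_2([X_\ge,Y])]_{\nbar}$ with coefficients that depend on $\nbar$ through $\Ad(\nbar^{-1})X$, and $R(X_{-\ge})$ is a first-order differential operator in $\nbar$ standing to the left of that expression. You froze those coefficients at $\nbar=e$ \emph{before} letting $R(X_{-\ge})$ act: you discarded the $d\chi\big((\Ad(\cdot^{-1})X)_\fl\big)$ term because $X_\fl=0$, and replaced the other term by $R(X_{-\ge})\Omega_2\big([X,[X_\ge,Y]]_\fl\big)$. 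But both coefficient functions merely \emph{vanish} at $e$; their derivatives under $R(X_{-\ge})$ do not, and since for a first-order operator $D$ with $\phi(e)=0$ one has $D_e(\phi\psi)=D_e(\phi)\psi(e)$, the entire contribution at $e$ comes from differentiating $\Ad(\cdot^{-1})X$, using $\big(R(X_{-\ge})\acts(\Ad(\cdot^{-1})X)\big)(e)=[X,X_{-\ge}]$. This is what produces $\sum_\ge\Omega_2\big([[X,X_{-\ge}],[X_\ge,Y]]\big)_e-\sum_\ge d\chi\big([X,X_{-\ge}]\big)\Omega_2([X_\ge,Y])_e$, i.e.\ precisely the sum to which Lemma \ref{Lem34} applies, together with a second copy of $-\Omega_2([X,Y])_e$. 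In your version neither term appears; moreover $[X,[X_\ge,Y]]_\fl=0$ anyway (since $[X_\ge,Y]\in\fl$ and $[V^+,\fl]\subset V^+$ — both Jacobi pieces you wrote down also have zero $\fl$-part), so your displayed identity actually reduces to $[\Pi(X),\tOmega_3(Y)]_e=-\Omega_2([X,Y])_e$, which is nonzero in general and would contradict the theorem.

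Because of this, your closing plan — Jacobi manipulations, using Proposition \ref{Prop35} ``in reverse,'' and matching the coefficient $2$ of Lemma \ref{Lem34} against a $\dim V^+$-type count for $\sum_\ge d\chi([X,X_{-\ge}]_\fl)$ — has no route to the required cancellation, and the last guess is off the mark: writing $X$ as a combination of $X_\ga$ with $\ga\in\gD(V^+)$, the only $\fl$-components $[X_\ga,X_{-\ge}]_\fl$ on which $d\chi$ is nonzero are the Cartan pieces with $\ge=\ga$ (root vectors lie in $[\fl,\fl]$, which $d\chi$ kills), and $d\chi(H_\ga)=\gamma(H_\ga)=1$, so each of the two $d\chi$-sums collapses exactly to $\Omega_2([X,Y])_e$, no dimension count involved. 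Combining these two copies of $-\Omega_2([X,Y])_e$ with $\sum_\ge\Omega_2\big([[X,X_{-\ge}],[X_\ge,Y]]\big)=2\Omega_2([X,Y])$ from Lemma \ref{Lem34} gives $0$. Your treatment of the first Leibniz piece (via Proposition \ref{Prop34}, with $[X,X_{-\ge}]_{V^-}=0$) is correct; it is the premature evaluation at $e$ inside the second piece that breaks the argument.
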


\begin{proof}
Observe that 
$\tilde{\Omega}_3(Y) = \sum_{\ge \in \gD(V^+)} R(X_{-\ge})\Omega_2\big( [X_\ge, Y] \big)$.
Then the commutator $[\Pi(X), \tilde{\Omega}_3(Y)]$ is a sum of two terms.
One of them is given by
\begin{align}\label{Eqn37}
&\sum_{\ge\in\gD(V^+)}[\Pi(X), R(X_{-\ge})]\Omega_2\big( [X_\ge, Y]\big)\\
&=
\sum_{\ge\in\gD(V^+)}R\big([\Ad(\cdot^{-1})X, X_{-\ge}]_{V^-}\big) \Omega_2\big( [X_\ge, Y] \big)
-\sum_{\ge\in\gD(V^+)}
 d\chi\big( [\Ad(\cdot^{-1})X, X_{-\ge}]_\fl\big) \Omega_2\big([X_\ge, Y]\big),
\nonumber
\end{align}
by Proposition \ref{Prop34}. At $e$, the first term is zero,
since $[X, X_{-\ge}]_{V^-} = 0$ for all $\ge \in \gD(V^+)$.
By writing out $X$ as a linear combination of $X_\ga$
with $\ga \in \gD(V^+)$,
one can see that at the identity the second term in (\ref{Eqn37}) evaluates to
\begin{equation*}
-\sum_{\ge\in\gD(V^+)} d\chi\big( [X, X_{-\ge}]\big) \Omega_2\big([X_\ge, Y]\big)_e
=-\Omega_2\big([X,Y]\big)_e
\end{equation*}
since $d\chi(H_\ga)=1$ for $\ga \in \gD(V^+)$. 
The other term is given by
\begin{align}\label{Eqn38}
&\sum_{\ge\in\gD(V^+)} R(X_{-\ge})\big[\Pi(X), \Omega_2( [X_\ge, Y])\big]\\
&= \sum_{\ge\in\gD(V^+)} R(X_{-\ge})\Omega_2\big([\Ad(\cdot^{-1})X,[X_\ge, Y]]_\fl\big)
- \sum_{\ge\in\gD(V^+)}
 R(X_{-\ge})d\chi\big( (\Ad(\cdot^{-1})X)_\fl \big)\Omega_2\big([X_\ge,Y]\big), \nonumber
\end{align}
by Proposition \ref{Prop35}. 
To further evaluate this expression, we make use of a simple general observation.
Namely, if $D$ is a first order differential operator, $\phi$ and $\psi$ are smooth functions,
and $\phi(e) =0$ then $D_e(\phi\psi) = D_e(\phi)\psi(e)$.
Notice that $\nbar \mapsto \ad(\Ad(\nbar^{-1})X)$ is a smooth function on $\Nbar_0$.
It follows from the left $C^\infty(\bar N_0)$-linear extension of $\Omega_2$
that the first term of the right hand side of $(\ref{Eqn38})$ can be expressed as
\beu
\sum_{\ge \in \gD(V^+)}
R(X_{-\ge}) \big( \ad(\Ad(\cdot^{-1})X)_\fl \cdot \Omega_2\big([X_\ge, Y]\big) \big),
\end{equation*} 
where $\ad(\Ad(\cdot^{-1})X)_\fl$ denotes the map $Z \mapsto [\Ad(\cdot^{-1})X, Z]_\fl$
for $Z \in \fg$.
Since we have
\beu 
\big(R(X_{-\ge})\acts (\Ad(\cdot^{-1})X)\big)(e)  = [X,X_{-\ge}],
\end{equation*}
\vskip 0.05in
\noindent
$[X, [X_\ge, Y]]_\fl = 0$, and
$X_\fl = 0$,  
the right hand side of (\ref{Eqn38}) then evaluates at the identity to
\beu
\sum_{\ge\in\gD(V^+)}\Omega_2\big([[X, X_{-\ge}],[X_\ge, Y]]\big)_e
-\sum_{\ge\in\gD(V^+)} d\chi\big( [X, X_{-\ge}]\big) \Omega_2\big([X_\ge, Y]\big)_e,
\end{equation*}
which is equivalent to
\beu
\sum_{\ge\in\gD(V^+)}\Omega_2\big([[X, X_{-\ge}],[X_\ge, Y]]\big)_e
-\Omega_2\big([X,Y]\big)_e.
\end{equation*}
Therefore we obtain
\beu
[\Pi(X), \tilde{\Omega}_3(Y)]_e
= \sum_{\ge\in\gD(V^+)}\Omega_2\big([[X, X_{-\ge}],[X_\ge, Y]]\big)_e
-2\Omega_2\big([X,Y]\big)_e.
\end{equation*}
Now it follows from Lemma \ref{Lem34} that 
$[\Pi(X), \tilde{\Omega}_3(Y)]_e = 0$.
\end{proof}

\begin{Prop}\label{Prop37}
For $Y \in V^-$, we have $[\Pi(X_\gamma), \tilde{\Omega}_3(Y)]_e = 0$.
\end{Prop}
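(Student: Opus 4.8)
The plan is to mimic the structure of the proof of Theorem \ref{Thm36}, but with $X$ replaced by $X_\gamma \in \fz(\fn)$, exploiting the fact that $X_\gamma$ commutes with everything in $V^+$. First I would write $\tilde{\Omega}_3(Y) = \sum_{\ge \in \gD(V^+)} R(X_{-\ge})\Omega_2([X_\ge, Y])$ and split the commutator $[\Pi(X_\gamma), \tilde{\Omega}_3(Y)]$ into the two terms coming from the Leibniz rule: one where $\Pi(X_\gamma)$ hits $R(X_{-\ge})$, governed by Proposition \ref{Prop34}, and one where it hits $\Omega_2([X_\ge, Y])$, governed by Proposition \ref{Prop35}. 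In each term the relevant quantity is $\Ad(\nbar^{-1})X_\gamma$ evaluated via the $R$-derivatives, and then specialized at $\nbar = e$ where $\Ad(e^{-1})X_\gamma = X_\gamma$.

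For the first term, Proposition \ref{Prop34} gives $R([\Ad(\cdot^{-1})X_\gamma, X_{-\ge}]_{V^-}) - d\chi([\Ad(\cdot^{-1})X_\gamma, X_{-\ge}]_\fl)$ applied to $\Omega_2([X_\ge, Y])$. At $e$ we need $[X_\gamma, X_{-\ge}]$ for $\ge \in \gD(V^+)$: since $\gamma - \ge$ is a root in $\gD(V^+)$ (as $\ge$ has $H_\gamma$-eigenvalue $1$ and $\gamma$ has eigenvalue $2$), this bracket lies in $V^+$, hence its $V^-$- and $\fl$-components both vanish, so the first term contributes $0$ at $e$. For the second term, Proposition \ref{Prop35} and the same "first-order operator kills a product when one factor vanishes at $e$" observation used in Theorem \ref{Thm36} reduce the evaluation at $e$ to $\sum_\ge \Omega_2([[X_\gamma, X_{-\ge}], [X_\ge, Y]])_e - \sum_\ge d\chi([X_\gamma,X_{-\ge}])\,\Omega_2([X_\ge,Y])_e$, using $(X_\gamma)_\fl = 0$ and $[X_\gamma,[X_\ge,Y]]_\fl = 0$ (the latter because $[X_\ge,Y] \in \fl$ has $H_\gamma$-eigenvalue $0$, so bracketing with $X_\gamma$ lands in $\fz(\fn)$, which has zero $\fl$-component). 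The key remaining point is that the derivative term $\big(R(X_{-\ge}) \acts \Ad(\cdot^{-1})X_\gamma\big)(e) = [X_\gamma, X_{-\ge}]$, exactly as in Theorem \ref{Thm36}.

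It remains to evaluate $\sum_{\ge \in \gD(V^+)} \Omega_2([[X_\gamma, X_{-\ge}],[X_\ge, Y]])$ and $\sum_{\ge \in \gD(V^+)} d\chi([X_\gamma, X_{-\ge}])\,\Omega_2([X_\ge,Y])$. For the second sum, $[X_\gamma, X_{-\ge}]$ lies in the root space $\fg_{\gamma - \ge} \subset V^+$, so its $\fh$-component is zero and hence $d\chi([X_\gamma, X_{-\ge}]) = 0$; that whole sum vanishes. For the first sum, I would note $[X_\gamma, X_{-\ge}] \in \fg_{\gamma - \ge}$ and $[X_\ge, Y] \in \fl$; reindexing $\ge' = \gamma - \ge$ runs over $\gD(V^+)$ as $\ge$ does, and one can try to recognize the sum as (a multiple of) $\sum_{\ge'} \Omega_2([X_{\ge'}, [X_{-\ge'}, \text{something}]])$ of the type appearing in Lemma \ref{Lem34}, or argue more directly that $[[X_\gamma, X_{-\ge}],[X_\ge, Y]] \in [\,V^+, \fl\,] \subset V^+$ contributes nothing after $\Omega_2$ — but $\Omega_2$ is defined only on $\fl$, so in fact $\Omega_2$ of a $V^+$-element is not defined, which forces the bracket $[[X_\gamma,X_{-\ge}],[X_\ge,Y]]$ to be interpreted after projection, and I expect it simply vanishes termwise or sums to zero by a weight/root argument. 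Sorting out this last sum — showing $\sum_{\ge} \Omega_2([[X_\gamma, X_{-\ge}],[X_\ge,Y]])_e = 0$, presumably by observing each summand lies outside $\fl$ or by a Chevalley-constant identity — is the main obstacle; once it is zero, combining with the first term gives $[\Pi(X_\gamma),\tilde{\Omega}_3(Y)]_e = 0$ as claimed.
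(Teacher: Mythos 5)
Your route is viable and, once one small point is cleared up, it is a complete proof; but it is genuinely different from the paper's. The step you label ``the main obstacle'' is in fact immediate: the formula in Proposition \ref{Prop35} already carries the $\fl$-projection, so after differentiating the coefficient function along $R(X_{-\ge})$ at $e$ the summand you must evaluate is $\Omega_2\big([[X_\gamma,X_{-\ge}],[X_\ge,Y]]_{\fl}\big)$, not $\Omega_2$ of the raw bracket. Since $[X_\gamma,X_{-\ge}]\in\fg_{\gamma-\ge}\subset V^+$ and $[X_\ge,Y]\in\fl$, the bracket $[[X_\gamma,X_{-\ge}],[X_\ge,Y]]$ lies in $[V^+,\fl]\subseteq V^+$, so its $\fl$-component vanishes and each summand is literally zero; no reindexing, no analogue of Lemma \ref{Lem34}, and no Chevalley-constant identity is needed. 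The same remark disposes of the $d\chi$ sum: the relevant quantity is $d\chi\big([X_\gamma,X_{-\ge}]_{\fl}\big)=0$ termwise. With that, your computation shows every contribution to $[\Pi(X_\gamma),\tilde{\Omega}_3(Y)]_e$ vanishes, which is consistent with (and slightly stronger in appearance than) the cancellation that occurs for $X\in V^+$ in Theorem \ref{Thm36}.

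The paper proves the proposition without touching Propositions \ref{Prop34} and \ref{Prop35} at all: it writes $X_\gamma$ as a bracket, using $\fz(\fn)=[V^+,V^+]$, notes $\Pi([X_1,X_2])=[\Pi(X_1),\Pi(X_2)]$, and expands $[\Pi([X_1,X_2]),\tilde{\Omega}_3(Y)]$ by the Jacobi identity; each resulting term vanishes at $e$ because $\Pi(X)_e=0$ for $X\in\fn$ (from (\ref{Eqn21})), $(D_1D_2)_e=(D_1)_e D_2$, and Theorem \ref{Thm36} kills the factors $[\Pi(X_i),\tilde{\Omega}_3(Y)]_e$. That argument uses Theorem \ref{Thm36} as a black box and requires no knowledge of the internal structure of $\tilde{\Omega}_3$ or of the root combinatorics, so it is shorter and more robust; your computation buys an explicit termwise verification but duplicates machinery the paper only needs once.
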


\begin{proof}
Since $\fz(\fn) = [V^+, V^+]$, it suffices to show that 
$[\Pi\big([X_1, X_2]\big), \tilde{\Omega}_3(Y)]_e = 0$ for $X_1, X_2 \in V^+$.
Note that we have $\Pi\big([X_1, X_2]\big) = [\Pi(X_1), \Pi(X_2)]$, so 
it follows from the Jacobi identity that
$[\Pi\big([X_1, X_2]\big), \tilde{\Omega}_3(Y)]$ may be 
expressed as a sum of two terms. The first is
\begin{equation*}
[\Pi(X_1), [\Pi(X_2), \tilde{\Omega}_3(Y)]] 
=\Pi(X_1)[\Pi(X_2), \tilde{\Omega}_3(Y)] - [\Pi(X_2), \tilde{\Omega}_3(Y)]\Pi(X_1).
\end{equation*}
\vskip 0.05in
\noindent By (\ref{Eqn21}), we have $\Pi(X)_e = 0$ for all $X \in \fn$. 
Using this fact and Theorem \ref{Thm36}, it is obtained that 
$[\Pi(X_1), [\Pi(X_2), \tilde{\Omega}_3(Y)]] _e = 0$ 
since $(D_1 D_2)_e = (D_1)_e  D_2$ for $D_1, D_2 \in \mathbb{D}(\Cal{L}_1)$.
The second term is 
\beu
[\Pi(X_2), [\tilde{\Omega}_3(Y), \Pi(X_1)]]
=\Pi(X_2)[\tilde{\Omega}_3(Y),\Pi(X_1)] - [\tilde{\Omega}_3(Y),\Pi(X_1)]\Pi(X_2).
\end{equation*}
\vskip 0.05in
\noindent It follows from the same argument for the first term that we have
$[\Pi(X_2), [\tilde{\Omega}_3(Y),\Pi(X_1)]]_e = 0$. 
This concludes the proposition.
\end{proof}

\begin{Thm}\label{Thm311}
Let $\fg$ be the complex simple Lie algebra of type $D_4$, 
and $\fq$ be the parabolic subalgebra of Heisenberg type.
Then the $\tilde{\Omega}_3$ system is conformally invariant on the line bundle $\Cal{L}_1$.
\end{Thm}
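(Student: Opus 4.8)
The plan is to verify the hypotheses of Proposition~\ref{Prop22} for the singleton list $\tilde\Omega_3(X_{-\ga_\gamma})$, or more precisely for the list $\tilde\Omega_3\big|_{V^-}$ consisting of $\tilde\Omega_3(X_{\gb})$ as $\gb$ ranges over $\gD(V^-)$, over the line bundle $\Cal{L}_1$ (i.e.\ with $s=-1$). First I would record that each $\tilde\Omega_3(Y)$ lies in $\mathbb{D}(\Cal{L}_1)^{\bar\fn}$ by construction, and that the list is linearly independent at $e$ --- this is where one checks that $\tOmega_3$ is not identically zero for $D_4$, which can be read off from the explicit formula for $\omega_3(Y)$ together with the data on p.~831 of \cite{BKZ08} (the correction term $C_3$ being irrelevant since $t_0=0$). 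The bulk of the work is condition (S2), and by Theorem~\ref{Thm24} it suffices to show that $F_0(V^-)=\spn_\C\{\omega_3(Y)\mid Y\in V^-\}$ is a $\fq$-submodule of $\eM_\fq(\C_{-d\chi})$, where I write $\omega_3=\omega_3^0$.

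The $\fl$-stability of $F_0(V^-)$ is already in hand: Lemma~\ref{Lem42} (with $t=0$) gives $\omega_3([Z,Y]) = Z\,\omega_3(Y) + 2\,d\chi(Z)\,\omega_3(Y)$ for $Z\in\fl$, $Y\in V^-$, so $Z\cdot F_0(V^-)\subseteq F_0(V^-)$. Hence everything reduces to showing that $\fn$ acts trivially on $F_0(V^-)$, and since $\fz(\fn)=[V^+,V^+]$ it is enough to kill the action of $V^+$. Now by the $L_0$-equivariant isomorphism (\ref{Eqn23}) and the identification of $\mathbb{D}(\Cal{L}_1)^{\bar\fn}$ with $\Cal{U}(\bar\fn)\otimes\C_{-d\chi}$, the statement ``$X\,\omega_3(Y)=0$ for all $X\in V^+$'' is precisely the statement ``$[\Pi(X),\tilde\Omega_3(Y)]_e = -\,d\chi\big((\Ad(e^{-1})X)_\fl\big)\,\tilde\Omega_3(Y)_e = 0$'' obtained from Theorem~\ref{Thm24} at $\nbar=e$, because $X\in V^+\subset\fn$ has $(\Ad(e)^{-1}X)_\fl = 0$ and $(\Ad(e)^{-1}X)_\fq = X\in\fn$ contributes $a_{ri}$ through the $\fn$-action only. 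This is exactly the content of Theorem~\ref{Thm36} for $X\in V^+$ and of Proposition~\ref{Prop37} for $X=X_\gamma$ spanning $\fz(\fn)$; together they say $[\Pi(X),\tilde\Omega_3(Y)]_e=0$ for all $X\in\fn$, which is equivalent to $\fn\cdot F_0(V^-)=0$.

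So the proof assembles as follows. First, cite Theorem~\ref{Thm36} and Proposition~\ref{Prop37} to conclude $[\Pi(X),\tilde\Omega_3(Y)]_e = 0$ for every $X\in V^+\oplus\fz(\fn)=\fn$ and every $Y\in V^-$; translating through (\ref{Eqn23}) this shows $\fn$ annihilates $F_0(V^-)\subset\eM_\fq(\C_{-d\chi})$. Second, invoke Lemma~\ref{Lem42} to see $F_0(V^-)$ is $\fl$-stable, so $F_0(V^-)$ is a genuine $\fq$-submodule. Third, apply Theorem~\ref{Thm24} with $F=F_0(V^-)$ and basis $\{\omega_3(X_\gb)\}_{\gb\in\gD(V^-)}$: it produces, for each $Y\in\fg$, a matrix of functions on $\bar N_0$ realizing $[\Pi(Y),\tilde\Omega_3(X_{\gb_i})]$ as a $C^\infty(\bar N_0)$-combination of the $\tilde\Omega_3(X_{\gb_r})$, which is condition (S2); combined with linear independence at $e$ this is exactly the hypothesis of Proposition~\ref{Prop22}, yielding conformal invariance over $\Cal{L}_1$.

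The main obstacle is entirely contained in Theorem~\ref{Thm36} --- that is, the vanishing $\sum_{\ge\in\gD(V^+)}\Omega_2([[X,X_{-\ge}],[X_\ge,Y]])_e = 2\,\Omega_2([X,Y])_e$ supplied by Lemma~\ref{Lem34}, which in turn rests on the structure-constant identity in Proposition~2.2 of \cite{BKZ08} and the $D_4$-specific value $p(D_4,\eC)=4$. Given that lemma, the rest is bookkeeping: expanding the two terms of $[\Pi(X),\tilde\Omega_3(Y)]$ via Propositions~\ref{Prop34} and~\ref{Prop35}, using $X_\fl=0$, $[X,[X_\ge,Y]]_\fl=0$, $d\chi(H_\ga)=1$ on $\gD(V^+)$, and the first-order-operator observation $D_e(\phi\psi)=D_e(\phi)\psi(e)$ when $\phi(e)=0$. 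The extension from $V^+$ to all of $\fn$ in Proposition~\ref{Prop37} is then purely formal, using the Jacobi identity and $\Pi(X)_e=0$ for $X\in\fn$. I do not anticipate needing any input beyond what Sections~\ref{Section3} and~\ref{Section4} and the cited results of \cite{BKZ08,BKZ09} already provide, so the theorem follows by assembling the pieces above.
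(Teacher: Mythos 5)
Your proposal rests on exactly the same pillars as the paper's argument---Theorem \ref{Thm36} and Proposition \ref{Prop37} for the $\fn$-directions, Lemma \ref{Lem31} (equivalently Lemma \ref{Lem42}) for the $\fl$-directions, and Proposition \ref{Prop22} to reduce everything to the identity---but the way you assemble them contains a circular step. To show that $\fn$ annihilates $F_0(V^-)$ you invoke Theorem \ref{Thm24} ``at $\nbar=e$'' to translate $[\Pi(X),\tOmega_3(Y)]_e=0$ into $X\,\omega_3^0(Y)=0$ in $\eM_\fq(\C_{-d\chi})$. But Theorem \ref{Thm24} \emph{presupposes} that the span of the elements in question is already a finite-dimensional $\fq$-submodule, which is precisely what you are trying to establish; the converse translation (vanishing of the commutator at $e$ on all sections forces the corresponding element of the generalized Verma module to be annihilated) is true, but it belongs to the machinery of \cite{BKZ09} that this paper does not restate, so as written that step is unjustified.

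The gap is easy to close, and the paper's own proof shows how: skip the Verma-module detour entirely and verify the hypothesis of Proposition \ref{Prop22} directly, using the decomposition $\fg=\bar\fn\oplus\fl\oplus\fn$. For $\bar U\in\bar\fn$ the commutator vanishes because $\tOmega_3(Y)\in\mathbb{D}(\Cal{L}_1)^{\bar\fn}$; for $Z\in\fl$, Lemma \ref{Lem31} gives $[\Pi(Z),\tOmega_3(Y)]_e=\tOmega_3([Z,Y])_e-d\chi(Z)\tOmega_3(Y)_e$, which is already a combination of the operators of the system evaluated at $e$ since $[Z,Y]\in V^-$; and for $U\in\fn$ Theorem \ref{Thm36} and Proposition \ref{Prop37} give $[\Pi(U),\tOmega_3(Y)]_e=0$. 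By linearity in $X\in\fg$ this is exactly the hypothesis of Proposition \ref{Prop22}, with no appeal to Theorem \ref{Thm24}. (If you prefer the module formulation, either prove $\fn\cdot F_0(V^-)=0$ by a direct computation as in the $A_2$ case, or cite the relevant correspondence from \cite{BKZ09} rather than Theorem \ref{Thm24}.) One smaller point: linear independence at $e$ of the operators $\tOmega_3(X_\gb)$, $\gb\in\gD(V^-)$, needs more than ``$\tOmega_3$ is not identically zero''; it follows because $Y\mapsto\omega_3^0(Y)$ is, up to the twist in (\ref{Eqn45}), $L_0$-equivariant with $V^-$ irreducible, hence injective once nonzero, and linearly independent elements of $\Cal{U}(\bar\fn)$ yield linearly independent functionals at the identity.
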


\begin{proof}
By Proposition \ref{Prop22}, it suffices to check the conformal invariance 
of $[\Pi(X), \tOmega_3(Y)]$ at the identity for all $X \in \fg$ and all $Y \in V^-$.
It follows from Lemma \ref{Lem31} that 
\beu
[\Pi(Z), \tOmega_3(Y)]_e = \tOmega_3\big([Z,Y]\big)_e - d\chi(Z)\tOmega_3(Y)_e
\end{equation*}
for all $Z \in \fl$. 
Also Theorem \ref{Thm36} and Proposition \ref{Prop37} show that
$[\Pi(U),\tOmega_3(Y)] = 0$ for all $U \in \fn$.
As $\tOmega_3(Y)$ is an element in  $\mathbb{D}(\Cal{L}_1)^{\bar \fn}$,
it is clear that $[\Pi(\bar U),\tOmega_3(Y)]_e = 0$ for all $\bar U \in \bar\fn$.
Since $\fg = \bar \fn \oplus \fl \oplus \fn$, 
this concludes that the $\tOmega_3$ system
is conformally invariant on $\Cal{L}_1$. 
\end{proof}

Theorem \ref{Thm311} implies that 
$F_0(V^-) = \text{span}_\C\{\omega_3^0(Y)\; | \; Y \in V^-\}$ is a $\fq$-submodule
of $\eM_{\fq}(\C_{-d\chi})$, where $\omega^0_3(Y)$ is the element in $\eM_{\fq}(\C_{-d\chi})$
that corresponds to $\tOmega_3(Y) = \Omega_3^0(Y)$ under $R$. 
The argument after Lemma \ref{Lem43} then shows that 
there exists a non-zero $\Cal{U}(\fg)$-homomorphism 
\begin{equation*}
\eM_\fq(F_0(V^-)) \to \eM_\fq(\C_{-d\chi}).
\end{equation*}
It follows from Lemma \ref{Lem42} that $H_\gamma$ acts on $F_0(V^-)$ by $-5$,
while it acts on $\C_{-d\chi}$ by $-2$; in particular, $F_0(V^-)$ is not equivalent to $\C_{-d\chi}$. 
We now conclude the following corollary.

\begin{Cor}
Let $\fg$ be the complex simple Lie algebra of type $D_4$, 
and $\fq$ be the parabolic subalgebra of Heisenberg type.
Then the generalized Verma module $\eM_\fq(\C_{-d\chi})$ is reducible.
\end{Cor}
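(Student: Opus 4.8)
The plan is to obtain the corollary as a formal consequence of Theorem~\ref{Thm311} and the discussion immediately following it, by a weight count with respect to $\ad(H_\gamma)$. First I would record what is already in hand: Theorem~\ref{Thm311} guarantees that $F_0(V^-)$ is a $\fq$-submodule of $\eM_\fq(\C_{-d\chi})$ on which $\fn$ acts trivially and which is irreducible under $L_0$, so the construction described after Lemma~\ref{Lem43} produces a non-zero $\Cal{U}(\fg)$-homomorphism $\varphi\colon \eM_\fq(F_0(V^-)) \to \eM_\fq(\C_{-d\chi})$, $u\otimes\omega_3^0(Y)\mapsto u\cdot\omega_3^0(Y)$. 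Using $\Cal{U}(\fg)=\Cal{U}(\bar\fn)\Cal{U}(\fq)$ and the fact that $\fq$ stabilizes $F_0(V^-)$, the image of $\varphi$ is the $\Cal{U}(\fg)$-submodule $\Cal{U}(\bar\fn)\cdot F_0(V^-)$ of $\eM_\fq(\C_{-d\chi})$. Since this image is non-zero, it suffices to show that it is a \emph{proper} submodule.

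For properness I would exploit the $\ad(H_\gamma)$-grading. By Lemma~\ref{Lem42} applied with $Z=H_\gamma$ and $s_0=-1$, the element $H_\gamma$ acts on $F_0(V^-)$ by the scalar $2s_0-3=-5$. By the decomposition $(\ref{Intro1})$ the algebra $\bar\fn=\fz(\bar\fn)\oplus V^-$ carries $\ad(H_\gamma)$-eigenvalues $-2$ and $-1$, so every $\ad(H_\gamma)$-eigenvalue occurring in $\Cal{U}(\bar\fn)$ is non-positive; identifying $\eM_\fq(\C_{-d\chi})$ with $\Cal{U}(\bar\fn)\otimes\C_{-d\chi}$, on which $\Cal{U}(\bar\fn)$ acts by left multiplication, it follows that every $H_\gamma$-eigenvalue occurring in $\Cal{U}(\bar\fn)\cdot F_0(V^-)$ is $\le -5$. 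On the other hand the cyclic generator $1\otimes 1$ of $\eM_\fq(\C_{-d\chi})$ is an $H_\gamma$-eigenvector of eigenvalue $-d\chi(H_\gamma)=-(\gamma,\gamma)=-2$ by (C3). Since $-2>-5$, the vector $1\otimes 1$ does not lie in $\Cal{U}(\bar\fn)\cdot F_0(V^-)$; as $1\otimes 1$ generates $\eM_\fq(\C_{-d\chi})$, the image of $\varphi$ is a proper submodule. Being non-zero and proper, it witnesses the reducibility of $\eM_\fq(\C_{-d\chi})$.

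The deduction is essentially formal once Theorem~\ref{Thm311} is available; the one point that deserves a word is the non-vanishing of $\varphi$, i.e.\ that $F_0(V^-)\ne 0$, equivalently that $\tOmega_3$ is not identically zero on $V^-$. This, however, is already part of the material preceding the corollary, and could in any case be checked directly from the explicit formula for $\tOmega_3$ in Lemma~\ref{Lem30}. No genuine obstacle remains: the entire weight of the statement is carried by Theorem~\ref{Thm311}, and the corollary follows from the mismatch $-5\ne -2$ between the $H_\gamma$-eigenvalue on $F_0(V^-)$ and that on $\C_{-d\chi}$.
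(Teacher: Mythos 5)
Your proposal is correct and takes essentially the same route as the paper: Theorem \ref{Thm311} gives the non-zero $\Cal{U}(\fg)$-homomorphism $\eM_\fq(F_0(V^-))\to\eM_\fq(\C_{-d\chi})$, and the $H_\gamma$-eigenvalue mismatch ($-5$ on $F_0(V^-)$ versus $-2$ on $\C_{-d\chi}$) is exactly what the paper invokes. Your explicit weight estimate showing $1\otimes 1\notin \Cal{U}(\bar\fn)\cdot F_0(V^-)$ just makes precise the properness of the image that the paper leaves implicit in the remark that $F_0(V^-)$ is not equivalent to $\C_{-d\chi}$.
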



\bibliography{Draft8}

\end{document}